\documentclass[12pt]{article}

\usepackage{caption}
\DeclareCaptionLabelSeparator{twospace}{. }
\captionsetup{labelsep=twospace}
\usepackage{indentfirst}
\usepackage{mathrsfs}
\usepackage{enumerate}
\usepackage[a4paper,textwidth=6.3in,textheight=8.7in]{geometry}  
\usepackage[ruled,linesnumbered,vlined]{algorithm2e}
\usepackage{amsmath,amsthm,tikz,float}

\usepackage[colorlinks=true,citecolor=black,linkcolor=black,urlcolor=blue]{hyperref}
\usepackage{mathrsfs}
\usepackage{amssymb}
\setlength{\parskip}{0.5em}
\linespread{1.2}
\oddsidemargin 0.25truein
\topmargin -0.5truein


\usepackage[shortlabels]{enumitem}
\setenumerate[1]{itemsep=0pt,partopsep=0pt,parsep=\parskip,topsep=0pt}
\usepackage{amsmath,amsthm,tikz,float,amssymb}

\newtheorem{theorem}{Theorem}[section]
\newtheorem{lemma}[theorem]{Lemma}
\newtheorem{definition}[theorem]{Definition}
\newtheorem{observation}[theorem]{Observation}
\newtheorem{proposition}[theorem]{Proposition}
\newtheorem{conjecture}[theorem]{Conjecture}
\newtheorem{question}[theorem]{Question}

\newtheorem{claim}{Claim}
\begin{document}

\def\tfn{\footnote{Corresponding author.}}

\title{Generalized power domination in claw-free regular graphs\thanks{Supported in part by National Natural Science Foundation of China (No. 11871222) and Science and Technology Commission of Shanghai Municipality (Nos.
18dz2271000, 19JC1420100)}}
\author{Hangdi Chen \ \ \ \  Changhong Lu\tfn\ \ \ \  Qingjie Ye\\
School of Mathematical Sciences,\\
Shanghai Key Laboratory of PMMP,\\
East China Normal University,\\
Shanghai 200241, P. R. China\\
\\
Email: 471798693@qq.com\\
Email: chlu@math.ecnu.edu.cn\\
Email: mathqjye@qq.com}
\date{}
\maketitle

\begin{abstract}
In this paper, we  give a series of couterexamples to negate a conjecture and hence answer an open question  on the  $k$-power domination  of regular graphs (see  [P. Dorbec et al., SIAM J. Discrete Math., 27 (2013), pp. 1559-1574]).  Furthermore, we focus on the study of $k$-power domination of claw-free graphs. We show that for $l\in\{2,3\}$ and $k\ge l$, the $k$-power domination number of a  connected claw-free $(k+l+1)$-regular  graph on $n$ vertices is at most $\frac{n}{k+l+2}$, and this bound is tight.
\end{abstract}

{\bf Key words.} power domination, electrical systems monitoring, domination, regular graphs, claw-free graphs

{\bf AMS subject classification.} 05C69

\section{Introduction}
In this paper, we only consider simple graphs. Let $G=(V(G),E(G))$ (abbreviated as $G=(V,E)$) be a graph. The \emph{open neighborhood} $N_G(v)$ of a vertex $v$ consists of the vertices adjacent to $v$ and its \emph{closed neighborhood} is $N_G[v]=N_G(v)\cup\{v\}$. The \emph{open neighborhood} of a subset $S\subseteq V$ is the set $N_G(S)=\bigcup_{v\in S} N_G(v)$ and its \emph{closed neighborhood} is $N_G[S]=N_G(S)\cup S$. The \emph{degree} of a vertex $v$, denoted $d_G(v)$, is the size of its open neighborhood $|N_G(v)|$. Let $v$ be a vertex of $G$ and $F$ be a subset of $V$. We denote $N_F(v)=N_G(v)\cap F$, $N_F[v]=N_G[v]\cap F$ and $d_F(v)=|N_G(v)\cap F|$.  A graph $G$ is $k$-\emph{regular} if $d_G(v) = k$ for every vertex $v \in V$.  If the graph $G$ is clear from the context, we will omit the subscripts $G$ for convenience. The complete bipartite graph with partite sets of cardinality $i$ and $j$ is denoted by $K_{i,j}$. A \emph{claw-free} graph is a graph that does not contain a claw, i.e. $K_{1,3}$, as an induced subgraph. For a set $S\subseteq V$, we let $G[S]$ denote the subgraph induced by $S$. We say a subset $S\subseteq V$ is  a \emph{packing} if the vertices in $S$ are pairwise at distance at least three apart in $G$.


Electric power systems must be monitored continually. One way of monitoring these systems is to place phase measurement units (PMUs) at selected locations. Since the cost of a PMU is very high, it is desirable to minimize the number of PMUs. The authors of \cite{baldwin1993power,mili1991phasor}  introduced power domination to model the problem of monitoring electrical systems. Then, the problem was formulated as a graph theoretical problem by Haynes et al. in \cite{haynes2002domination}. Some additional propagation in power domination is using the Kirschoff's laws in electrical systems. The definition of power domination was simplified to the following definition independently in \cite{dorbec2008power,Dorfling2006A,guo2008improved,liao2005power}, which originally asked the systems to monitor both edges and vertices.

\begin{definition}\label{def1}(Power Dominating Set).
Let $G=(V,E)$ be a graph. A subset $S$ of $V$ is a power dominating set (abbreviated
as PDS) of $G$ if and only if all vertices of $V$ are observed either by Observation Rule 1 (abbreviated
as OR 1) initially or by Observation Rule 2 (abbreviated
as OR 2) recursively.

{\bf OR 1.} all vertices in  $N_G[S]$ are observed initially.

{\bf OR 2.} If an observed vertex $v$ has all neighbors  observed except one neighbor $u$, then  $u$ is observed (by $v$).
\end{definition}

The \emph{power domination number} $\gamma_p(G)$ is the minimum cardinality of a PDS of $G$. The power domination problem is known to be NP-complete (see \cite{aazami2010domination,aazami2009approximation,guo2008improved,haynes2002domination}).  Linear-time algorithms for this problem were presented for trees, interval graphs and block graphs (see \cite{haynes2002domination,liao2005power,xu2006power}). The Nordhaus-Gaddum problems for power domination were investigated in \cite{benson2018nordhaus} and  parameterized results were given in \cite{kneis2006parameterized}. The exact values of the power domination numbers of some special graphs were studied in   \cite{dorbec2008power,Dorfling2006A}. The upper bounds
for the power domination numbers of regular graphs were investigated (see, for example, \cite{Min2006Power,xu2011power}).

Chang et al. \cite{chang2012generalized} generalized power domination to $k$-power domination. In here, we use a definition of monitored set to define $k$-power dominating set.
\begin{definition} (Monitored Set).
Let $G=(V,E)$ be a graph, let $S\subseteq V$, and let $k\ge 0$ be an integer. We define the sets $(P^i_G(S))_{i\ge 0}$ of vertices monitored by $S$ at step $i$ by the following rules:

(1) $P_G^{0}(S)= N_G[S]$;

(2) $P_G^{i+1}(S)=\cup \{N_G[v]: v\in P_G^{i}(S)$ such that $|N_G[v] \setminus P_G^{i}(S)|\leq k\}$.
\end{definition}
It is clear that $P_G^{i}(S)\subseteq P_G^{i+1}(S)\subseteq V$ for any $i$.  If $P_G^{i_0}(S)=P_G^{i_0+1}(S)$ for some $i_0$, then $P_G^{j}(S)=P_G^{i_0}(S)$ for every $j\ge i_0$ and we accordingly define $P^{\infty}_G(S)=P_G^{i_0}(S)$.

\begin{definition}\label{def1}($k$-Power Dominating Set).
Let $G=(V,E)$ be a graph, let $S\subseteq V$, and let $k\ge 0$ be an integer. If $P^{\infty}_G(S)=V$, then  $S$ is called a $k$-power dominating set of $G$, abbreviated $k$-PDS. The $k$-power domination number of $G$, denoted by $\gamma_{p,k}(G)$, is the minimum cardinality of a $k$-PDS in $G$.
\end{definition}

The $k$-power domination problem is known to be NP-complete for chordal graphs and bipartite graphs \cite{chang2012generalized}. Linear-time algorithms for this problem were presented for trees \cite{chang2012generalized} and block graphs \cite{Chao2016}.  The bounds for the $k$-power domination numbers in regular graphs were obtained in \cite{chang2012generalized,Dorbec2013Generalized}. The relationship between the $k$-forcing and the $k$-power domination numbers of a graph was given in \cite{ferrero2018relationship}. The authors of \cite{dorbec2014generalized} studied the exact values for the $k$-power domination numbers in Sierpi$\acute{n}$ski graphs.

If $G$ is a connected $(k+1)$-regular graph, then $\gamma_{p,k}(G)=1$. Some scholars began to study the $k$-power domination number of $(k+2)$-regular graphs. Zhao et al. \cite{Min2006Power} showed that if $G$ is a $3$-regular claw-free graph on $n$ vertices, then $\gamma_{p,1}(G)\le \frac{n}{4}$. Chang et al. \cite{chang2012generalized} generalized this result to  $(k+2)$-regular claw-free graphs.  Dorbec et al. \cite{Dorbec2013Generalized} removed the claw-free condition and show that $\gamma_{p,k}(G)\le \frac{n}{k+3}$ if $G$ is a  $(k+2)$-regular graph on $n$ vertices. Moreover, they presented  the following conjecture and question.

\begin{conjecture}\label{DorbConj}(\cite{Dorbec2013Generalized})
For $k\ge 1$ and $r\ge 3$, if $G\not\cong K_{r,r}$ is a connected $r$-regular graph of order $n$, then $\gamma_{p,k}(G)\le \frac{n}{r+1}$.
\end{conjecture}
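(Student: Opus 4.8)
Conjecture~\ref{DorbConj} is in fact false, so the plan is to \emph{disprove} it by exhibiting a family of connected $r$-regular graphs $G\not\cong K_{r,r}$ with $\gamma_{p,k}(G)>\frac{n}{r+1}$. The guiding observation is to understand why $K_{r,r}$ is already excluded in the statement. Seeding a single vertex $v$ in one part $A$ of $K_{r,r}$ gives $P_G^{0}(\{v\})=N_G[v]=\{v\}\cup B$; but for every $u\in B$ we have $|N_G[u]\setminus P_G^{0}(\{v\})|=|A|-1=r-1$, so whenever $k\le r-2$ the propagation rule $|N_G[v]\setminus P_G^i(S)|\le k$ never triggers, $P^{\infty}_G(\{v\})\ne V$, and hence $\gamma_{p,k}(K_{r,r})=2>\frac{2r}{r+1}$. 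The phenomenon to exploit is therefore that when $r$ is large relative to $k$, propagation is almost entirely blocked and $k$-power domination collapses to ordinary domination, for which $\frac{n}{r+1}$ is only the trivial lower bound and is generally unattainable.

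Concretely, I would build each counterexample from $K_{r,r}$-type \emph{bottleneck gadgets}: complete bipartite (or nearly complete bipartite) pieces in which, until the piece is almost saturated, every monitored vertex retains at least $k+1$ unmonitored neighbors. Such gadgets are heavily non-claw-free (any vertex together with three vertices of the opposite part induces a $K_{1,3}$), which is exactly consistent with the main theorem of the paper certifying the bound \emph{for claw-free} graphs; the claws are precisely what supply the many pairwise-nonadjacent neighbors that stall propagation. I would then stitch $m$ gadgets together---for instance by deleting a small set of edges inside each gadget and adding connecting edges along a cycle of gadgets---so that the resulting graph is connected, exactly $r$-regular, and not isomorphic to $K_{r,r}$, while the connecting edges remain too few to let monitoring cascade from one gadget into the next.

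The counting step is then routine. If each gadget has $2r$ vertices (so $n=2rm$) and any $k$-PDS must place at least two seeds in (or adjacent to) each gadget, then $\gamma_{p,k}(G)\ge 2m$, whereas $\frac{n}{r+1}=\frac{2rm}{r+1}<2m$ because $2m(r+1)=2rm+2m>2rm$; this violates the conjectured bound strictly. I would tune the gadget size and the number of deleted and added edges so that the violation persists for all $k\ge 1$ and all sufficiently large $r$ (namely $r\ge k+2$, the regime in which $K_{r,r}$ itself fails), and record the precise parameter ranges for which the family works.

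The main obstacle is the lower bound on $\gamma_{p,k}(G)$, i.e.\ proving that propagation genuinely cannot rescue a small seed set. Unlike the clean single-gadget computation for $K_{r,r}$, in the assembled graph I must rule out that monitoring leaks across the few connecting edges and then cascades, since a single well-placed seed near a junction could conceivably start a chain reaction. The argument requires a robust invariant on the monitored sets $P_G^i(S)$: one must show that at every step, inside every not-yet-saturated gadget some vertex still has more than $k$ unmonitored neighbors, so that the propagation rule stays blocked and each gadget must be seeded on its own. Keeping this invariant intact under the edge modifications used to join gadgets, while maintaining exact $r$-regularity and connectivity, is the delicate part of the construction.
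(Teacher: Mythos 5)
Your overall strategy --- disproving the conjecture by chaining slightly perturbed copies of $K_{r,r}$ into an $r$-regular cycle of gadgets and arguing that each gadget must receive two seeds --- is exactly the paper's construction $D_{r,q}$ (take $q$ disjoint copies of $K_{r,r}-x_iy_i$ and add the edges $y_ix_{i+1}$ cyclically), and your counting $2m>\frac{2rm}{r+1}$ matches. But the step you yourself flag as ``the delicate part,'' namely the lower bound showing that propagation cannot leak across the connecting edges so that each gadget genuinely needs two seeds, is the entire content of the disproof, and you leave it as an unproved invariant. The paper closes it with a short, clean device: a \emph{$k$-fort} is a nonempty set $F$ such that every vertex of $N(F)\setminus F$ has at least $k+1$ neighbours in $F$, and no $k$-PDS $S$ can satisfy $N[F]\cap S=\emptyset$ (no vertex of $F$ can ever become observable, since each candidate observer always retains at least $k+1$ unobserved neighbours in $F$). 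If some gadget $D_i=K_{r,r}-x_iy_i$ with parts $(X_i,Y_i)$ has $|S\cap V(D_i)|\le 1$ and, say, $S\cap Y_i=\emptyset$, then $F=X_i\setminus(S\cup\{x_i\})$ has size at least $r-2=k+1$ and every vertex of $N(F)\setminus F\subseteq Y_i$ is adjacent to all of $F$, so $F$ is a $k$-fort with $N[F]\cap S=\emptyset$ --- a contradiction. This is precisely the ``robust invariant'' you were looking for, established once and for all rather than tracked step by step through the propagation.

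There is also a substantive error in your parameter range: you claim the family should work for all $r\ge k+2$. It cannot. Dorbec et al.\ proved $\gamma_{p,k}(G)\le\frac{n}{k+3}$ for every connected $(k+2)$-regular graph, i.e.\ the conjecture is a theorem when $r=k+2$, so counterexamples can exist only for $k\le r-3$. The paper's $D_{r,q}$ achieves $\gamma_{p,r-3}(D_{r,q})=\frac{n}{r}$, which together with the positive result pins down $k_{\min}(r)=r-2$ exactly; your range must be corrected to $r\ge k+3$ before any tuning of the gadgets can succeed.
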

\begin{question}\label{DorbQues}(\cite{Dorbec2013Generalized})
For $r\ge 3$, let $G\neq K_{r,r}$ is a connected $r$-regular graph of order $n$. Determine the smallest positive value, $k_{\min}(r)$, of $k$ such that $\gamma_{p,k}(G) \le \frac{n}{r + 1}$.
\end{question}

The result of Dorbec et al. in \cite{Dorbec2013Generalized} implies that Conjecture \ref{DorbConj} holds for $k=1$ and $r=3$ and $k_{min}(r)\le r-2$. Recently,  Lu et al. \cite{Lu2018} showed that Conjecture \ref{DorbConj} does not always hold for each even $r\ge 4$ and $k=1$.
In this paper, we  show that $k_{\min}(r)=r-2$ for $r\ge 3$ and negate Conjecture \ref{DorbConj} for each  $r\ge 4$ and $1\le k\le r-3$.  We  also show that there exists a series of claw-free $r$-regular graphs $G$ of order $n$ such that $\gamma_{p,k}(G)>\frac{n}{r}$  if $k<\lfloor\frac{r}{2}\rfloor$. But Conjecture \ref{DorbConj} may hold for  claw-free $r$-regular graphs if $k\ge \lfloor\frac{r}{2}\rfloor$.  The following theorem is the main result in this paper.

\begin{theorem}\label{MainResult}
For $l\in\{2,3\}$ and $k\ge l$, if $G$ is a  connected claw-free $(k+l+1)$-regular  graph of order $n$, then $\gamma_{p,k}(G)\le \frac{n}{k+l+2}$ and the bound is tight.
\end{theorem}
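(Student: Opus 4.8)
The plan is to prove the upper bound by exhibiting a \emph{packing} that is a $k$-power dominating set. Writing $r:=k+l+1$ for the degree, the cleanest observation is that any packing $S$ automatically satisfies the desired bound: since the vertices of $S$ are pairwise at distance at least three, the closed neighborhoods $\{N[s]:s\in S\}$ are pairwise disjoint, so $(r+1)|S|=|N[S]|\le n$ and hence $|S|\le \frac{n}{r+1}=\frac{n}{k+l+2}$. It therefore suffices to produce \emph{some} packing that $k$-power dominates $G$, and the natural candidate is a maximal packing $S$. Maximality forces every vertex of $G$ to lie within distance two of $S$, so after applying OR 1 the only vertices that remain unobserved are those at distance exactly two from $S$. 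The whole problem thus reduces to the following assertion, which I would isolate as the main lemma: in a connected claw-free $(k+l+1)$-regular graph with $l\in\{2,3\}$ and $k\ge l$, the rule OR 2 started from $N[S]$ eventually observes every vertex at distance two from $S$.

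The second step is to set up the local claw-free machinery driving the propagation. The key elementary fact is that if $x$ has two non-adjacent neighbors $a,b$, then every other neighbor of $x$ is adjacent to $a$ or to $b$ (otherwise it completes a claw at $x$); equivalently, for any $u\in N(s)$ the non-neighbors of $u$ inside $N(s)$ form a clique. Now fix an unobserved $w$ at distance two from $s\in S$ and a common neighbor $u\in N(s)\cap N(w)$, which is observed. Since the observed neighbors of $u$ include $s$ together with $N(u)\cap N(s)$, a direct count gives that $u$ has at most $|A_u|$ unobserved neighbors, where $A_u=N(s)\setminus N[u]$ and $|A_u|=r-1-|N(u)\cap N(s)|$; consequently $u$ fires OR 2 and observes $w$ as soon as $|N(u)\cap N(s)|\ge l$, that is, as soon as $u$ shares at least $l$ neighbors with $s$. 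Here I would record the role of the hypothesis $k\ge l$, which is exactly $r\ge 2l+1$: if propagation inside a single neighborhood stalled immediately, then that neighborhood would induce a graph of maximum degree at most $l-1$ with independence number at most two, forcing $r\le 2l$; hence $k\ge l$ rules this out and guarantees that propagation always makes progress.

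The hard part will be the remaining \emph{bad} case, in which the witnessing common neighbor $u$ has fewer than $l$ neighbors in $N(s)$. By the clique fact above this means the non-neighbors of $u$ in $N(s)$ form a large clique (of size at least $k+1$), a rigid local configuration, and I would analyze it by combining two ideas: first, following the forced adjacencies from the claw-free condition at $u$ (every neighbor of $u$ outside $\{s,w\}$ is adjacent to $s$ or to $w$) to locate an alternative observed vertex that does fire and observes $w$; and second, allowing several rounds of OR 2, so that vertices observed early reduce the unobserved-degree of their neighbors and thereby unlock the stalled ones. This is where the distinction $l=2$ versus $l=3$ genuinely enters, since the admissible degenerate neighborhoods (essentially a matching when $l=2$, versus a union of small cliques and short cycles when $l=3$) behave differently; I expect this step to require the most careful and lengthiest case analysis in the whole argument.

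Finally, for tightness I would exhibit connected claw-free $(k+l+1)$-regular graphs attaining the bound. The simplest witness is $K_{k+l+2}$ itself, which is $(k+l+1)$-regular, claw-free, and has $\gamma_{p,k}=1=\frac{n}{k+l+2}$; to obtain an infinite family with the same ratio I would link blocks on $k+l+2$ vertices through interfaces made thick enough that every boundary vertex retains at least $k+1$ unobserved neighbors in the adjacent block, so that OR 2 can never cross a block boundary and each block is forced to contain a vertex of every $k$-PDS. Checking that such a family can be realized while preserving regularity and the claw-free condition is routine but must be carried out explicitly.
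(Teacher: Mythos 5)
Your reduction rests on the claim that a maximal packing $S$ is automatically a $k$-power dominating set, so that the bound follows from the disjointness of the closed neighborhoods $N[s]$, $s\in S$. This claim is false, and the obstruction is exactly the notion of a $k$-fort (Proposition \ref{k-fort} in the paper): if $F$ is a nonempty set such that every vertex of $N(F)\setminus F$ has at least $k+1$ neighbors in $F$, then no $k$-PDS can avoid $N[F]$, because OR~2 can never observe a first vertex of $F$ from outside. Maximality of a packing only guarantees that every vertex is within distance two of $S$; it does not guarantee $S\cap N[F]\neq\emptyset$. The paper's own tightness example $C_{k,t}$ already defeats your lemma: each block $C_i\cong A_l$ contains a clique $L_i$ of $k+1$ vertices whose closed neighborhood is all of $V(C_i)$ and which is a $k$-fort, and one can choose a maximal packing with no vertex in some block $C_i$ (its vertices are still covered at distance two through the linking edges from the neighboring blocks), whence $L_i$ is never observed. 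So the assertion you isolate as ``the main lemma'' --- that OR~2 started from $N[S]$ reaches every vertex at distance two from an arbitrary maximal packing --- cannot be proved.

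The paper's argument differs in two essential ways. First, it pre-seeds the packing: it defines $L$-configurations (sets $L$ that are simultaneously cliques and $k$-forts), shows distinct configurations are vertex-disjoint, places one vertex of each configuration into $\mathcal{P}_0$, and only then extends to a maximal packing $S_0$. Second, and more importantly, it does not insist that the final $k$-PDS be a packing at all: even $S_0$ need not power dominate, and the proof augments it to $S_0\subset S_1\subset\cdots\subset S_q$ one vertex at a time, proving (Lemma \ref{mainlma}) that each added vertex enlarges $P^\infty$ by at least $k+l+2$ vertices. The bound then follows from the amortized count $n\ge (k+l+2)|S_0|+q(k+l+2)=(k+l+2)|S_q|$, not from disjoint closed neighborhoods. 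Your local claw-free computations (the clique structure of $L_u=N(u)\cap\overline{M}$ and the threshold $|N(u)\cap N(s)|\ge l$ for firing OR~2) do reappear as Claims in that lemma, and your tightness construction is in the right spirit, but without the fort/configuration machinery and the augmentation step the core of the upper-bound proof is missing.
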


\section{Counterexamples}

 Motivated by the concept of a fort proposed in \cite{bozeman2019restricted}, we define the  concept of a $k$-fort, which is a natural generalization of a fort.
\begin{definition}($k$-fort).
    For an integer $k\ge 1$, a $k$-fort of a graph $G$ is a nonempty set $F \subseteq V$ such that each vertex of $N_G(F)\backslash F$ is adjacent to at least $k+1$ vertices in $F$.
    \end{definition}
 If $F$ is a $k$-fort of $G$, then $|F|\ge k+1$.  We immediately obtain the following proposition. 
\begin{proposition}\label{k-fort}
    Let $G=(V,E)$ be a graph and $F$ be a $k$-fort of $G$. If $S$ is a $k$-PDS of $G$, then $S \cap N_G[F] \neq \emptyset$.
\end{proposition}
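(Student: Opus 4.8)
The plan is to prove the contrapositive: assuming $S \cap N_G[F] = \emptyset$, I will show that $F$ is never monitored and hence $P^{\infty}_G(S) \neq V$, so $S$ cannot be a $k$-PDS. The natural engine for this is an induction on the monitoring step $i$, proving the invariant $F \cap P^i_G(S) = \emptyset$ for every $i \ge 0$. Since $F$ is nonempty by definition, this invariant passing to $P^{\infty}_G(S)$ immediately yields the claim. The whole argument hinges on the defining property of a $k$-fort controlling what can propagate \emph{into} $F$ from outside.

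First I would handle the base case. Here $P^0_G(S) = N_G[S] = S \cup N_G(S)$. A vertex $f \in F$ lies in $P^0_G(S)$ only if $f \in S$ or $f$ has a neighbor in $S$; the former is impossible because $S \cap F = \emptyset$, and the latter would force some $s \in S$ with $s \in N_G(f) \subseteq N_G(F)$, contradicting $S \cap N_G(F) = \emptyset$. Hence $F \cap P^0_G(S) = \emptyset$. For the inductive step, assume $F \cap P^i_G(S) = \emptyset$ and suppose for contradiction that some $f \in F$ enters $P^{i+1}_G(S)$. By the propagation rule, $f \in N_G[v]$ for some $v \in P^i_G(S)$ with $|N_G[v] \setminus P^i_G(S)| \le k$. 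The induction hypothesis rules out $v \in F$ (in particular $v \neq f$), so $f \in N_G(v)$ places $v$ in $N_G(F) \setminus F$. Now the $k$-fort property applies: $v$ is adjacent to at least $k+1$ vertices of $F$, and by the induction hypothesis none of these lie in $P^i_G(S)$. These $k+1$ neighbors therefore all belong to $N_G[v] \setminus P^i_G(S)$, forcing $|N_G[v] \setminus P^i_G(S)| \ge k+1 > k$, which contradicts the condition that allowed $v$ to propagate. This contradiction gives $F \cap P^{i+1}_G(S) = \emptyset$ and closes the induction.

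The step I expect to require the most care is the inductive step, specifically the bookkeeping that the $\ge k+1$ forced-out neighbors of $v$ genuinely sit inside the unmonitored remainder $N_G[v] \setminus P^i_G(S)$ and are counted correctly against the threshold $k$; the case distinction ensuring $v \notin F$ must also be made explicit so that the $k$-fort hypothesis (which only governs vertices of $N_G(F) \setminus F$) is legitimately invoked. Once the invariant is established for all $i$, I would conclude by noting $F \cap P^{\infty}_G(S) = \emptyset$ with $F \neq \emptyset$, so $P^{\infty}_G(S) \neq V$, contradicting that $S$ is a $k$-PDS; this proves $S \cap N_G[F] \neq \emptyset$.
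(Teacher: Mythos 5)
Your proof is correct: the induction on the monitoring step, using the $k$-fort property to show that no vertex of $N_G(F)\setminus F$ can ever have at most $k$ unmonitored closed neighbors while all of $F$ remains unmonitored, is exactly the argument the paper has in mind when it states this proposition as immediate (it gives no written proof). Nothing is missing; your explicit handling of the case $v\notin F$ and the count of the $k+1$ neighbors is the right level of care.
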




\begin{observation}\label{fan1}
    For each $r\ge 4$ and $q\ge 2$, there exists a connected $r$-regular graph $D_{r,q}\neq K_{r,r}$ of order $n=2qr$ such that $\gamma_{p,r-3}(D_{r,q})=2q=\frac{n}{r}>\frac{n}{r+1}$.
\end{observation}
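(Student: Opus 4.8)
The plan is to realize $D_{r,q}$ as a \emph{necklace} of $q$ copies of $K_{r,r}$. Write the $i$-th copy with sides $L_i$ and $R_i$, each of size $r$, and single out one ``port'' vertex $a_i\in L_i$ and $b_i\in R_i$ in each copy. I would delete the edge $a_ib_i$ inside every copy and then add the cross edges $a_ib_{i+1}$ for $i=1,\dots,q$ (indices read mod $q$). A direct degree count shows every vertex keeps degree $r$ (each $a_i$ trades $a_ib_i$ for $a_ib_{i+1}$, and each $b_i$ trades $a_ib_i$ for $a_{i-1}b_i$), so $D_{r,q}$ is $r$-regular; it is connected and bipartite with parts $\bigcup_i L_i$ and $\bigcup_i R_i$, has order $n=2qr$, and for $q\ge 2$ it is not $K_{r,r}$ since it has more than $2r$ vertices. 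First I would record these structural facts.

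For the upper bound I would place one PMU on a non-port vertex of each side of each block: pick $x_i\in L_i\setminus\{a_i\}$ and $y_i\in R_i\setminus\{b_i\}$ and set $S=\{x_i,y_i:1\le i\le q\}$. Since $x_i$ is adjacent to all of $R_i$ and $y_i$ to all of $L_i$, already $N_G[x_i]\cup N_G[y_i]\supseteq L_i\cup R_i$, so $P_G^{0}(S)=V$ and hence $\gamma_{p,r-3}(D_{r,q})\le |S|=2q$.

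The heart of the argument is the matching lower bound, where the key observation is a propagation threshold. Since $k=r-3$ and every vertex has degree $r$, a vertex fires under OR~2 only once at least $3$ of its neighbors are observed. Inside a block $B_i\cong K_{r,r}$ every left vertex has all its neighbors on the right, so a right vertex can observe the left side only after $3$ left vertices are already observed; moreover at most one vertex can be leaked into $L_i$ from outside $B_i$, namely the port $a_i$ (the unique $L_i$-vertex incident with a cross edge), and symmetrically for $R_i$. Writing $\ell_i=|S\cap L_i|$ and $\rho_i=|S\cap R_i|$, this bounds the number of observed left vertices by $\ell_i+1$ until some right vertex fires, so observing all of $L_i$ forces either a PMU on the right ($\rho_i\ge 1$) or $\ell_i\ge 2$; symmetrically, observing all of $R_i$ forces $\ell_i\ge 1$ or $\rho_i\ge 2$. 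I would then check that none of $(\ell_i,\rho_i)\in\{(0,0),(1,0),(0,1)\}$ meets both requirements, whence $\ell_i+\rho_i\ge 2$ for every block and $|S|=\sum_i(\ell_i+\rho_i)\ge 2q$.

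The step I expect to be the main obstacle is precisely this factor of two. Proposition~\ref{k-fort} alone yields only the weaker bound $\gamma_{p,r-3}(D_{r,q})\ge q$: the sets $L_i\setminus\{a_i\}$ and $R_i\setminus\{b_i\}$ are $(r-3)$-forts confined to $B_i$, but every fort has closed neighborhood of size at least $r+1$ (its vertices keep all $r$ incident edges inside the closed neighborhood), so at most $q$ forts can have pairwise disjoint closed neighborhoods among $2qr$ vertices and no packing argument can reach $2q$. Extracting the second PMU per block therefore requires the quantitative propagation analysis above, together with the construction's feature of exactly one port per side, which caps inter-block leakage at one vertex per side; the delicate point is verifying that a leaked vertex can never substitute for the missing PMU, even when both neighbouring blocks have already been fully observed.
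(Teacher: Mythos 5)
Your construction is the same graph as the paper's $D_{r,q}$ (a cyclic necklace of $q$ copies of $K_{r,r}$ minus one edge, re-regularized by the cross edges), and your upper bound is the same idea with a cosmetically different choice of PMUs (the paper puts $S$ on the ports $x_i,y_i$ rather than on non-port vertices; either way $N[S]=V$). Where you genuinely diverge is the lower bound. You run a direct propagation count: with $k=r-3$ a vertex fires only after $3$ of its neighbours are observed, at most one vertex per side of a block can be observed from outside, and the case check on $(\ell_i,\rho_i)\in\{(0,0),(1,0),(0,1)\}$ is sound (including the port $b_i$, which even with its outside neighbour observed collects at most $2$ observed neighbours when $\ell_i\le 1$), so $\ell_i+\rho_i\ge 2$ and $|S|\ge 2q$. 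The paper instead gets the same conclusion from Proposition~\ref{k-fort}: assuming some block $D_1$ has $|S\cap V(D_1)|\le 1$ with, say, $S\cap Y_1=\emptyset$, the set $F=X_1\setminus(S\cup\{x_1\})$ is an $(r-3)$-fort with $N[F]\cap S=\emptyset$, a contradiction. This means your parenthetical claim that ``Proposition~\ref{k-fort} alone yields only $\ge q$'' and that ``no packing argument can reach $2q$'' is mistaken in a way worth flagging: the proposition does not require the forts to be disjoint or fixed in advance, and the paper reaches $2q$ precisely by choosing the fort \emph{adaptively as a function of $S$}. Your propagation argument is more hands-on and makes the mechanism visible; the paper's adaptive-fort argument is shorter and is the template reused throughout the rest of the paper, so it is worth internalizing.
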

\begin{proof}
    We define the graph $D_{r,q}$ as follows: Take $q$ disjoint copies $D_i\cong K_{r,r}-x_iy_i$, where $x_i, y_i\in V(K_{r,r})$ and $i\in\{1,2,\cdots q\}$. Then add  edges $y_ix_{i+1}$ for each $i\in\{1,2,\cdots,q\}$, where $x_{q+1} = x_1$ (see Figure \ref{drq}).
Suppose that $T=\bigcup_{i=1}^q \{x_i,y_i\}$ and $k=r-3$. It is clear that $T$ is a $k$-PDS of $D_{r,q}$. Then, we have $\gamma_{p,k}(D_{r,q})\le |T|\le 2q$. Now, we show $\gamma_{p,k}(D_{r,q})\ge 2q$. Let $S$ be a $k$-PDS of $D_{r,q}$. Assume that $(X_i,Y_i)$ is the bipartition of $D_i$, where $x_i\in X_i$, $y_i\in Y_i$ and $i\in\{1,2,\cdots,q\}$. We claim that $|S\cap V(D_i)|\ge 2$ for each $i\in\{1,2,\cdots,q\}$. Otherwise, without loss of generality, suppose that $|S\cap V(D_1)|\le 1$ and  $S\cap Y_1=\emptyset$. Then $F=X_1\backslash (S\cup \{x_1\})$ is a $k$-fort and $N_{D_{r,q}}[F]\cap S=\emptyset$, contradicting Proposition \ref{k-fort}.
\end{proof}

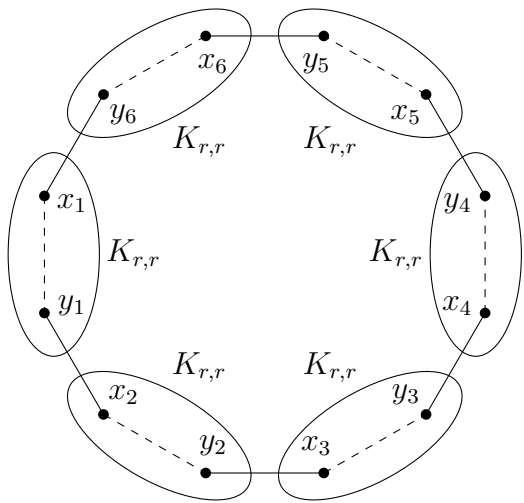
\begin{figure}[H]
    \centering
    \begin{tikzpicture}
        \foreach \i in {1,...,6}
        {
            \coordinate (x\i) at (105+60*\i:3);
            \coordinate (y\i) at (135+60*\i:3);
            \node at (105+60*\i:1.75*1.5) {$x_\i$};
            \node at (135+60*\i:1.75*1.5) {$y_\i$};
            \node at (120+60*\i:1.15*1.5) {$K_{r,r}$};
            \draw[dashed] (x\i) -- (y\i);
            \fill (x\i) circle[radius=2pt];
            \fill (y\i) circle[radius=2pt];
            \draw[rotate=60*\i+30] (0,1.85*1.5) ellipse [x radius=0.9*1.5,y radius=0.4*1.5];
        }
        \foreach \i[evaluate={\j=int(\i+1)}] in {1,...,5}
        {
            \draw (y\i) -- (x\j);
        }
        \draw (y6) -- (x1);

    \end{tikzpicture}
    \caption{The graph $D_{r,6}$}\label{drq}
\end{figure}

By Observation \ref{fan1}, we know that   Conjecture \ref{DorbConj} does not always hold for each $r\ge 4$ and $1\le k\le r-3$, and hence $k_{\min}(r)=r-2$ for $r\ge 3$.  A natural problem is whether  $\frac{n}{r}$ is always the upper bound of $\gamma_{p,k}(G)$ in Conjecture \ref{DorbConj}. We will discuss this problem using the relation between $k$-power domination and total domination in regular graphs.


  A set $S$ of vertices in a graph $G$ is called a \emph{total domination set}  (abbreviated as TDS) of $G$  if every vertex of $G$ is adjacent to some vertex in $S$. The minimum cardinality of a TDS of $G$ is the \emph{total domination number} of $G$, denoted by $\gamma_t(G)$. Now we present the following  observation.

\begin{observation}\label{tp}
    For each  $k\ge 1$ and $r\ge 1$, if $G$ is a connected $r$-regular graph of order $n$, then there exists a connected $r'$-regular graph $G'$ of order $n'=(k+2)n$  such that  $r'=(k+2)r$ and $\gamma_{p,k}(G')=\gamma_t(G)$.
\end{observation}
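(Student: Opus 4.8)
The plan is to build $G'$ by blowing up each vertex of $G$ into an independent set of $k+2$ vertices. Concretely, for every $v\in V(G)$ I take a \emph{cloud} $C_v=\{v^1,\dots,v^{k+2}\}$ of pairwise non-adjacent vertices, let $V(G')=\bigcup_{v\in V(G)}C_v$, and join each vertex of $C_u$ to each vertex of $C_w$ exactly when $uw\in E(G)$. Then $|V(G')|=(k+2)n$, and every vertex of $C_v$ has degree $\sum_{w\in N_G(v)}|C_w|=(k+2)d_G(v)=(k+2)r$, so $G'$ is $(k+2)r$-regular; since $r\ge 1$, connectivity of $G$ passes to $G'$. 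It then remains to prove the two inequalities in $\gamma_{p,k}(G')=\gamma_t(G)$.

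For $\gamma_{p,k}(G')\le\gamma_t(G)$ I would start from a minimum total dominating set $D$ of $G$, choose one representative $\sigma(v)\in C_v$ for each $v\in D$, and set $S'=\{\sigma(v):v\in D\}$. Because $D$ totally dominates $G$, every $z\in V(G)$ has a neighbour $v\in D$, and then $\sigma(v)$ is adjacent to all of $C_z$, so $C_z\subseteq N_{G'}[\sigma(v)]\subseteq P_{G'}^{0}(S')$. As this holds for every $z$, we get $P_{G'}^{0}(S')=V(G')$ already at step $0$, whence $S'$ is a $k$-PDS and $\gamma_{p,k}(G')\le|S'|=\gamma_t(G)$.

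The reverse inequality $\gamma_{p,k}(G')\ge\gamma_t(G)$ is the heart of the argument, and I would run it through the notion of a $k$-fort. Let $S'$ be a minimum $k$-PDS and set $D=\{v\in V(G):C_v\cap S'\neq\emptyset\}$, so $|D|\le|S'|$. First note that each cloud $C_v$ is itself a $k$-fort of $G'$: any vertex outside $C_v$ adjacent to it lies in a neighbouring cloud and is therefore joined to all $k+2\ge k+1$ vertices of $C_v$. Proposition \ref{k-fort} then forces $S'\cap N_{G'}[C_v]\neq\emptyset$ for every $v$, i.e.\ $v\in D$ or $N_G(v)\cap D\neq\emptyset$; thus $D$ is a dominating set of $G$.

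To upgrade $D$ to a total dominating set the only obstructions are the isolated vertices $I$ of $G[D]$, and the key claim—where the real work sits—is that $|C_v\cap S'|\ge 2$ for every $v\in I$. To prove it I would imitate the device of Observation \ref{fan1}: if $|C_v\cap S'|\le 1$, then $F=C_v\setminus S'$ is nonempty of size at least $k+1$, it is a $k$-fort (each external neighbour lies in a neighbouring cloud and is joined to every vertex of $F$), and its closed neighbourhood $F\cup\bigcup_{w\in N_G(v)}C_w$ avoids $S'$ because $v$ is isolated in $G[D]$, contradicting Proposition \ref{k-fort}. Granting the claim, I form $D^{\ast}$ by adding to $D$ one $G$-neighbour of each vertex of $I$ (possible as $r\ge 1$); one checks directly that $D^{\ast}$ is a total dominating set (every vertex of $D$ off $I$ already has a neighbour in $D$, each $v\in I$ now has its added neighbour, and every vertex outside $D$ is dominated by $D$) with $|D^{\ast}|\le|D|+|I|$. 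Finally $|S'|=\sum_{v\in D}|C_v\cap S'|\ge|D\setminus I|+2|I|=|D|+|I|\ge|D^{\ast}|\ge\gamma_t(G)$, which closes the loop. I expect the main obstacle to be precisely this claim together with the conversion it powers: one must see that an isolated cloud is forced to carry a spare seed of $S'$, and that these spare seeds are exactly what pay for the extra vertices needed to turn the dominating set $D$ into a total dominating set.
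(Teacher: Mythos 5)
Your construction of $G'$ and your proof of $\gamma_{p,k}(G')\le\gamma_t(G)$ coincide with the paper's: the same blow-up into independent clouds of size $k+2$, and the same observation that one representative per vertex of a total dominating set already monitors everything at step $0$. The reverse inequality is where you genuinely diverge. The paper first normalizes a minimum $k$-PDS $S'$ so that $|S'\cap V_i|\le 1$ for every cloud, via local exchanges that do not increase $|S'|$, and then asserts that the projection is a total dominating set -- the verification that the projection is \emph{total} rather than merely dominating is left implicit there. You instead project $S'$ directly to $D$, prove $D$ is dominating by noting that each whole cloud is a $k$-fort, and then repair totality with a counting argument: a vertex $v$ isolated in $G[D]$ forces $|C_v\cap S'|\ge 2$, since otherwise the $\ge k+1$ unseeded vertices of $C_v$ form a $k$-fort whose closed neighbourhood (which lies in $C_v$ together with the clouds of $N_G(v)$, all disjoint from $S'$ by isolation) misses $S'$, contradicting Proposition \ref{k-fort}; these spare seeds then pay exactly for the neighbours you add to turn $D$ into $D^{\ast}$. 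All the individual steps check out (regularity and connectivity of $G'$, both fort verifications, the chain $|S'|\ge|D\setminus I|+2|I|=|D|+|I|\ge|D^{\ast}|\ge\gamma_t(G)$, and the fact that the added neighbours are themselves totally dominated by their partners in $I$). Your route is somewhat longer, but it buys a complete justification of precisely the point the paper glosses over, and it avoids having to re-verify after each exchange that the modified set is still a $k$-PDS.
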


\begin{proof}
    Let $V(G)=\{v_1,v_2,\cdots,v_n\}$. Let $G'$ be the graph constructed from $G$ as follows. Take $n$ disjoint independent sets $V_i=\{v_i^1,v_i^2,\cdots,v_i^{k+2}\}$ corresponding to $v_i$, where $i\in\{1,2,\cdots,n\}$.  For each edge $v_iv_j\in E(G)$, add the edges $v_i^s v_j^q$ for each $s,q\in \{1,2,\cdots,k+2\}$ (see Figure \ref{ftp}).

    Let $S=\{v_{i_1},v_{i_2},\cdots,v_{i_h}\}$ be a TDS of $G$ with $h=\gamma_t(G)$. It is easy to check that $\{v_{i_1}^1,v_{i_2}^1,\cdots,v_{i_h}^1\}$ is a $k$-PDS of $G'$. Hence, $\gamma_{p,k}(G')\le \gamma_t(G)$. On the other hand, let $S'$ be a $k$-PDS of $G'$ with $|S'|=\gamma_{p,k}(G')$.  We can change some vertices of $S'$ such that $|S'\cap V_i|\le 1$ for each $i\in\{1,2,\cdots,n\}$. Otherwise, without loss of generality, assume that $|S'\cap V_1|\ge 2$. If there exists $j\in\{2,3,\cdots,n\}$ such that $S'\cap V_j\neq \emptyset$ and $V_j\subseteq N_G(v_1^1)$, then $S''=(S'\backslash V_1) \cup \{v_1^1\}$ is also a $k$-PDS of $G'$ and $|S''|<|S'|=\gamma_{p,k}(G')$, a contradiction. Now we assume $S'\cap V_j=\emptyset$ for each $V_j\subseteq N_G(v_1^1)$, where $j\in\{2,3,\cdots,n\}$. Let $S''=(S'\backslash V_1) \cup \{v_1^1,v_j^1\}$. Thus, $S''$ is also a $k$-PDS of $G'$ such that $|S''\cap V_1|=1$.  Let $S'=S''$. Hence, we find a $k$-PDS $S'$ of $G'$ such that $|S'\cap V_i|\le 1$ for each $i\in\{1,2,\cdots,n\}$. Let $S=\emptyset$. For each $i\in\{1,2,\cdots,n\}$, if $|S'\cap V_i|=1$, we add $v_i$ to $S$. Then $S$ is a TDS of $G$ with $|S|=\gamma_{p,k}(G')$, implying that $\gamma_{p,k}(G')\ge \gamma_t(G)$.
\end{proof}

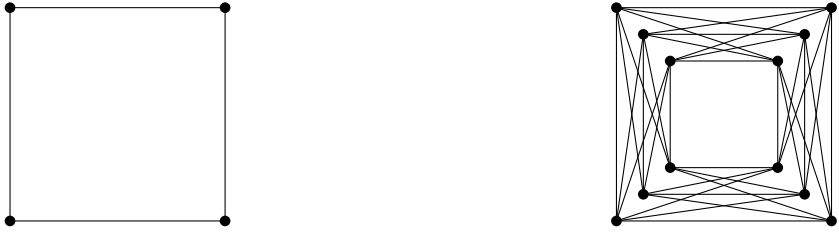
\begin{figure}[H]
    \centering
    \begin{minipage}[t]{0.49\textwidth}
    \centering
    \begin{tikzpicture}
        \foreach \i in {1,2,3,4}
        {
            \coordinate (a\i) at (-45+90*\i:2);
            \fill (a\i) circle[radius=2pt];
        }
        \draw (a1) -- (a2) -- (a3) -- (a4) -- cycle;
    \end{tikzpicture}
    \end{minipage}
    \begin{minipage}[t]{0.49\textwidth}
        \centering
        \begin{tikzpicture}
            \foreach \i in {1,2,3,4}
            {
                \foreach \j in {1,2,3}
                {
                    \coordinate (a\i\j) at (-45+90*\i:\j/2+0.5);
                    \fill (a\i\j) circle[radius=2pt];
                }
            }
            \foreach \i [evaluate={\l=int(\i+1)}] in {1,2,3}
            {
                \foreach \j in {1,2,3}
                {
                    \foreach \k in {1,2,3}
                    {
                        \draw (a\i\j) -- (a\l\k);
                    }
                }
            }
            \foreach \j in {1,2,3}
            {
                \foreach \k in {1,2,3}
                {
                    \draw (a1\j) -- (a4\k);
                }
            }
        \end{tikzpicture}
        \end{minipage}
    \caption{An example of transformation in Observation \ref{tp} for $k=1$}\label{ftp}
\end{figure}

    The authors of \cite{favaron2000total} constructed 3-regular graphs $F_{0,q}$ of order $4q$ such that $\gamma_t(F_{0,q})=2q$ (see Figures \ref{f}-\ref{f04}). By Observation \ref{tp}, we can construct $F_{k,q}$ $(=G')$ from $F_{0,q}$ $(=G)$, and so $\gamma_{p,k}(F_{k,q})=\gamma_t(F_{0,q})=2q=\frac{3}{2}\frac{n'}{3k+6}=\frac{3n'}{2r'}$. Hence, $\frac{n'}{r'}$ is not the upper bound of  $\gamma_{p,k}(G')$ in Conjecture \ref{DorbConj}.

\begin{figure}[H]
    \centering
    \centering
    \begin{minipage}[t]{0.38\textwidth}
    \centering
    \begin{tikzpicture}
        \foreach \i in {0}
        {
            \coordinate (z\i1) at (2*\i,1);
            \coordinate (z\i2) at (2*\i+1,1);
            \coordinate (z\i3) at (2*\i+1,0);
            \coordinate (z\i4) at (2*\i,0);

            \fill (z\i1) circle[radius=2pt];
            \fill (z\i2) circle[radius=2pt];
            \fill (z\i3) circle[radius=2pt];
            \fill (z\i4) circle[radius=2pt];

            \draw (z\i2) -- (z\i1) -- (z\i4) -- (z\i3) -- (z\i1);
            \draw (z\i2) -- (z\i4);

        }
        \draw (z02) -- (z03);

    \end{tikzpicture}
    \caption{The graph $F_{0,1}$}\label{f}
    \end{minipage}
    \centering
    \begin{minipage}[t]{0.58\textwidth}
    \centering
    \begin{tikzpicture}
        \foreach \i in {0,1,2,3}
        {
            \coordinate (z\i1) at (2*\i,1);
            \coordinate (z\i2) at (2*\i+1,1);
            \coordinate (z\i3) at (2*\i+1,0);
            \coordinate (z\i4) at (2*\i,0);

            \fill (z\i1) circle[radius=2pt];
            \fill (z\i2) circle[radius=2pt];
            \fill (z\i3) circle[radius=2pt];
            \fill (z\i4) circle[radius=2pt];

            \draw (z\i2) -- (z\i1) -- (z\i3) -- (z\i4) -- (z\i2);

        }
        \foreach \i [evaluate={\j=int(\i+1)}] in {0,1,2}
        {
            \draw (z\i2) -- (z\j2);
            \draw (z\i3) -- (z\j3);

        }
        \draw (z01) -- (z04);
        \draw (z32) -- (z33);

    \end{tikzpicture}
    \caption{The graph $F_{0,4}$}\label{f04}
    \end{minipage}

\end{figure}
Now, an interesting problem is whether $\frac{n}{r}$ is always the upper bound of $\gamma_{p,k}(G)$  when $G$ is claw-free. We will discuss this problem in next section.

\section{Claw-free regular graphs}
First, we establish the relation between $k$-power domination and domination by presenting Observation \ref{dp}. Then, we use Observation \ref{dp} to construct a series of regular claw-free graphs satisfying that $\gamma_{p,k}(G)=\frac{4n}{3(r+1)}>\frac{n}{r}$, where $r>3$.

A set $S$ of vertices in a graph $G$ is called a \emph{domination set} (abbreviated as DS) of $G$  if every vertex of $V\setminus S$ is adjacent to some vertex of $G$. The minimum cardinality of a DS of $G$ is the \emph{domination number} of $G$, denoted by $\gamma(G)$.

\begin{observation}\label{dp}
    For each $k\ge 1$ and $r\ge 1$, if $G$ is a connected $r$-regular claw-free graph of order $n$, then there exists a connected $r'$-regular claw-free graph $G'$ of order $n'=(k+1)n$ such that $r'=kr+r+k$ and $\gamma_{p,k}(G')=\gamma(G)$.
\end{observation}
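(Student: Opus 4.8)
The plan is to construct $G'$ from $G$ by a vertex-blowup that parallels the construction in Observation~\ref{tp}, but using $k+1$ copies per vertex instead of $k+2$, chosen precisely so that the relevant domination notion collapses from total domination to ordinary domination. Given $V(G)=\{v_1,\dots,v_n\}$, I would replace each $v_i$ by an independent set $V_i=\{v_i^1,\dots,v_i^{k+1}\}$, and for each edge $v_iv_j\in E(G)$ join every $v_i^s$ to every $v_j^q$. A direct degree count gives $d_{G'}(v_i^s)=(k+1)\cdot d_G(v_i)=(k+1)r$; the discrepancy with the claimed $r'=kr+r+k=(k+1)r+k$ signals that the construction must be augmented so each blown-up vertex gains $k$ extra neighbors. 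The natural fix is to turn each $V_i$ into a clique (adding the $\binom{k+1}{2}$ internal edges), which contributes exactly $k$ to each degree, yielding $r'=(k+1)r+k$ as required; this also keeps $G'$ connected whenever $G$ is, and makes $n'=(k+1)n$ immediate.

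\textbf{Verifying claw-freeness} is the first substantive step. I would argue that any three pairwise-nonadjacent neighbors of a vertex $v_i^s$ would have to lie in three distinct blobs $V_a,V_b,V_c$ (two neighbors inside one blob are adjacent, since each $V_i$ is now a clique), so they would induce a claw in $G'$ only if $v_a,v_b,v_c$ form an independent neighborhood of $v_i$ in $G$ — contradicting that $G$ is claw-free. This reduction of induced claws in $G'$ to induced claws in $G$ is exactly why the clique-blowup is the correct operation.

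\textbf{For the domination equality} I would prove both inequalities. For $\gamma_{p,k}(G')\le\gamma(G)$: given a dominating set $D$ of $G$, take $S=\{v_i^1:v_i\in D\}$ and show $P_G'^{\infty}(S)=V(G')$. The key point is that the initial neighborhood $N_{G'}[S]$ already contains every blob $V_i$ with $v_i\in N_G[D]=V(G)$, hence all of $V(G')$; so $S$ is even a $0$-power dominating set and a fortiori a $k$-PDS. For the reverse inequality $\gamma_{p,k}(G')\ge\gamma(G)$, I would take a minimum $k$-PDS $S'$ and, exactly as in Observation~\ref{tp}, normalize it so that $|S'\cap V_i|\le 1$ for all $i$ without increasing its size, then let $S=\{v_i:S'\cap V_i\ne\emptyset\}$ and show $S$ dominates $G$. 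Here I expect the propagation analysis to be the main obstacle: I must show that with only $k$ external neighbors allowed to remain unmonitored per application of OR~2, no blob $V_j$ can become fully monitored unless some blob in its closed neighborhood already meets $S'$, which is what forces $S$ to be a dominating (rather than merely total-dominating) set. The clique structure inside each $V_j$ is what tightens the threshold from $k+2$ down to $k+1$ copies, since the $k$ internal neighbors consume part of the budget in the forcing rule, and pinning down this counting carefully is the crux of the argument.
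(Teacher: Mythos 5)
Your construction is exactly the paper's: replace each vertex by a clique of $k+1$ copies, join blobs completely along edges of $G$, and prove the two inequalities by lifting a dominating set (which is already dominating in $G'$, hence a $k$-PDS) and by normalizing a minimum $k$-PDS to meet each blob at most once before projecting. The one step you flag as the remaining crux--that an undominated blob $V_j$ can never be monitored--follows immediately because $V_j$ is a $k$-fort (every external neighbor sees all $k+1>k$ of its vertices), which is no more than the paper itself leaves implicit; the proposal is correct and essentially identical to the paper's proof.
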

\begin{proof}
    Let $V(G)=\{v_1,v_2,\cdots,v_n\}$. Let $G'$ be the graph constructed from $G$ as follows. Take $n$ disjoint cliques $V_i=\{v_i^1,v_i^2,\cdots,v_i^{k+1}\}$ corresponding to $v_i$. For each edge $v_iv_j\in E(G)$, add the edges $v_i^s v_j^q$ for each $s,q\in \{1,2,\cdots,k+1\}$ (see Figure \ref{fdp}). It is easy to check that $G'$ is a claw-free graph.


    Let $S=\{v_{i_1},v_{i_2},\cdots,v_{i_t}\}$ be a DS of $G$ with $t=\gamma(G)$. Then $\{v_{i_1}^1,v_{i_2}^1,\cdots,v_{i_t}^1\}$ is a $k$-PDS of $G'$, implying that $\gamma_{p,k}(G')\le \gamma(G)$. On the other hand, let $S'=\{v_{i_1}^{j_1},v_{i_2}^{j_2},\cdots,v_{i_t}^{j_t}\}$ be a $k$-PDS of $G'$ with $t=\gamma_{p,k}(G')$. If there exists $i\in\{1,2,\cdots,n\}$ such that $|S'\cap V_i|\ge 2$, then $S''=(S'\backslash V_i) \cup \{v_i^1\}$ is also a $k$-PDS of $G'$ with $|S''|<|S'|$, a contradiction. Hence, $|S'\cap V_i|\le 1$ for each $i\in\{1,2,\cdots,n\}$. Thus, $\{v_{i_1},v_{i_2},\cdots,v_{i_t}\}$ is a DS of $G'$, implying that $\gamma_{p,k}(G')\ge \gamma(G)$.
\end{proof}

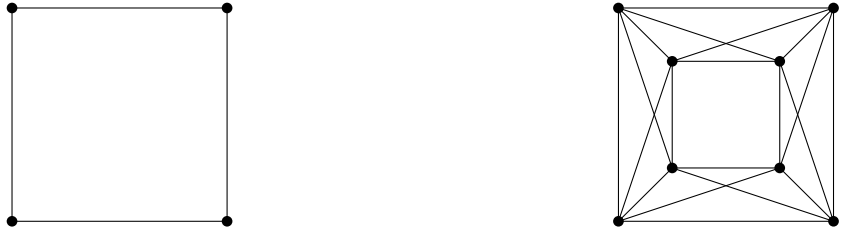
\begin{figure}[H]
    \centering
    \begin{minipage}[t]{0.49\textwidth}
    \centering
    \begin{tikzpicture}
        \foreach \i in {1,2,3,4}
        {
            \coordinate (a\i) at (-45+90*\i:2);
            \fill (a\i) circle[radius=2pt];
        }
        \draw (a1) -- (a2) -- (a3) -- (a4) -- cycle;
    \end{tikzpicture}
    \end{minipage}
    \begin{minipage}[t]{0.49\textwidth}
        \centering
        \begin{tikzpicture}
            \foreach \i in {1,2,3,4}
            {
                \foreach \j in {1,2}
                {
                    \coordinate (a\i\j) at (-45+90*\i:\j);
                    \fill (a\i\j) circle[radius=2pt];
                }
                \draw (a\i1) -- (a\i2);
            }
            \foreach \i [evaluate={\l=int(\i+1)}] in {1,2,3}
            {
                \foreach \j in {1,2}
                {
                    \foreach \k in {1,2}
                    {
                        \draw (a\i\j) -- (a\l\k);
                    }
                }
            }
            \foreach \j in {1,2}
            {
                \foreach \k in {1,2}
                {
                    \draw (a1\j) -- (a4\k);
                }
            }
        \end{tikzpicture}
        \end{minipage}
    \caption{An example of transformation in Observation \ref{dp} for $k=1$}\label{fdp}
\end{figure}

    Let $H$ be the graph of order 6 as drawn in Figure \ref{h}. We define the graph $H_{0,q}$ as follows. Take $q$ disjoint copies $H_i\cong H$, where $i=1,2,\cdots,q$. For each $i\in\{1,2,\cdots,q\}$, let $x_i,y_i\in V(H_i)$ such that $d_{H_i}(x_i)=d_{H_i}(y_i)=2$. Add the edges $y_ix_{i+1}$, where $i=1,2,\cdots,q$ and $x_{q+1} = x_1$ (see Figure \ref{h04}). It is clear that $H_{0,q}$ is a connected $3$-regular claw-free graph of order $6q$. By Observation \ref{dp}, we can construct $H_{k,q}$ $(=G')$  from $H_{0,q}$ $(=G)$.

    Let $S=\bigcup_{i=1}^q \{x_i,y_i\}$. Then $S$ is a DS of $H_{0,q}$, implying that $\gamma(H_{0,q})\le 2q$. Since $\gamma(C_4)=2$, we get $\gamma(H_{0,q})\ge 2q$. So $\gamma(H_{0,q})=2q$. By Observation \ref{dp},  $\gamma_{p,k}(H_{k,q})=\gamma(H_{0,q})=2q=\frac{4}{3}\frac{n'}{4k+4}=\frac{4n'}{3(r'+1)}>\frac{n'}{r'}$. Hence, $\frac{n'}{r'}$ is not always the upper bound of $\gamma_{p,k}(G')$  when $G'$ is claw-free.

\begin{figure}[H]
    \centering
    \centering
    \begin{minipage}[t]{0.49\textwidth}
    \centering
    \begin{tikzpicture}
        \foreach \i in {2}
        {
            \coordinate[label=left:{$x$}] (x\i) at (60+90*\i:2*2);
            \coordinate[label=right:{$y$}] (y\i) at (120+90*\i:2*2);

            \coordinate (z\i1) at (75+90*\i:1.41*2);
            \coordinate (z\i2) at (80+90*\i:2.13*2);
            \coordinate (z\i3) at (100+90*\i:2.13*2);
            \coordinate (z\i4) at (105+90*\i:1.41*2);

            \fill (x\i) circle[radius=2pt];
            \fill (y\i) circle[radius=2pt];
            \fill (z\i1) circle[radius=2pt];
            \fill (z\i2) circle[radius=2pt];
            \fill (z\i3) circle[radius=2pt];
            \fill (z\i4) circle[radius=2pt];

            \draw (x\i) -- (z\i1) -- (z\i4) -- (y\i) -- (z\i3) -- (z\i2) -- (x\i);
            \draw (z\i1) -- (z\i2);
            \draw (z\i3) -- (z\i4);

            \fill (0,-6) circle[radius=0pt];
        }

    \end{tikzpicture}
    \caption{The graph $H$}\label{h}
    \end{minipage}
    \centering
    \begin{minipage}[t]{0.49\textwidth}
    \centering
    \begin{tikzpicture}
        \foreach \i in {1,...,4}
        {
            \coordinate (x\i) at (60+90*\i:2);
            \coordinate (y\i) at (120+90*\i:2);
            \node at (60+90*\i:2.27) {$x_\i$};
            \node at (120+90*\i:2.27) {$y_\i$};

            \coordinate (z\i1) at (75+90*\i:1.41);
            \coordinate (z\i2) at (80+90*\i:2.13);
            \coordinate (z\i3) at (100+90*\i:2.13);
            \coordinate (z\i4) at (105+90*\i:1.41);

            \fill (x\i) circle[radius=2pt];
            \fill (y\i) circle[radius=2pt];
            \fill (z\i1) circle[radius=2pt];
            \fill (z\i2) circle[radius=2pt];
            \fill (z\i3) circle[radius=2pt];
            \fill (z\i4) circle[radius=2pt];

            \draw (x\i) -- (z\i1) -- (z\i4) -- (y\i) -- (z\i3) -- (z\i2) -- (x\i);
            \draw (z\i1) -- (z\i2);
            \draw (z\i3) -- (z\i4);

        }
        \foreach \i[evaluate={\j=int(\i+1)}] in {1,...,3}
        {
            \draw (y\i) -- (x\j);
        }
        \draw (y4) -- (x1);

    \end{tikzpicture}
    \caption{The graph $H_{0,4}$}\label{h04}
    \end{minipage}

\end{figure}
Now we know that in Conjecture \ref{DorbConj}, if $r-k$ is sufficiently large, then $\frac{n}{r}$ is not always the upper bound of $\gamma_{p,k}(G)$. 
For each $r\ge 4$ and $k=\lfloor\frac{r}{2}\rfloor-1$, we will show that Conjecture \ref{DorbConj} does not always hold  for claw-free $r$-regular graphs  by presenting Observations \ref{odd} and \ref{even}. It means that $k_{min}(r)\ge \lfloor\frac{r}{2}\rfloor$ even restricted to claw-free regular graphs in the Question \ref{DorbQues}.

\begin{observation}\label{odd}
    For each odd $r\ge 5$ and $q\ge 1$, there exists a connected claw-free $r$-regular graph $G_{r,q}$ of order $n=|V(G_{r,q})|$ such that $\gamma_{ p,\frac{r-3}{2}}(G_{r,q})=\frac{n+2}{r+1}>\frac{n}{r+1}$.
\end{observation}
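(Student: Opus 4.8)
The plan is to produce $G_{r,q}$ explicitly as a cyclic necklace of clique-based gadgets and then to determine $\gamma_{p,k}(G_{r,q})$ for $k=\frac{r-3}{2}$ by sandwiching it between a matching upper and lower bound. Since $r$ is odd, set $k=\frac{r-3}{2}$, so that $r=2k+3$, $k+1=\frac{r-1}{2}$ and $r+1=2(k+2)$. I would build $m:=\frac{n+2}{r+1}$ gadgets, each essentially a clique on $r+1$ vertices carrying two distinguished \emph{connector} vertices, and join consecutive gadgets through their connectors around a cycle; the gadgets are glued and sized so that the total order is $n=(r+1)m-2$, the $-2$ being forced by suppressing or identifying one connector pair at a distinguished seam (this is exactly the source of the boundary term). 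Before computing the domination number I would check the three structural properties: (i) $r$-regularity, by a short degree count at connector versus interior vertices; (ii) claw-freeness, by observing that the open neighborhood of every vertex is a union of at most two cliques and hence contains no independent triple; and (iii) connectedness, which is immediate from the cyclic gluing.

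For the upper bound I would exhibit a $k$-PDS of size exactly $m=\frac{n+2}{r+1}$, namely one seed in the interior of each gadget. By the initial rule $P^0_G(S)=N_G[S]$, $S$ already monitors each gadget's clique, and a boundary analysis then shows that the connectors, and through them the neighboring gadgets, become observed after a bounded number of applications of the propagation rule (2) in the definition of the monitored sets; the point is that the number of still-unmonitored neighbors at the relevant step is at most $k=\frac{r-3}{2}$, which is precisely the threshold the gadget is engineered to meet. This is where the hypothesis $k=\frac{r-3}{2}=\lfloor r/2\rfloor-1$ is used on the upper side. Once the exact equality $\gamma_{p,k}(G_{r,q})=\frac{n+2}{r+1}$ is established, the strict inequality $\frac{n+2}{r+1}>\frac{n}{r+1}$ is trivial, and this is what refutes Conjecture \ref{DorbConj} for claw-free graphs at $k=\lfloor r/2\rfloor-1$.

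The lower bound $\gamma_{p,k}(G_{r,q})\ge\frac{n+2}{r+1}$ is the heart of the statement and the step I expect to be the main obstacle. Each gadget core is a $k$-fort: the only vertices of $N_G(F)\setminus F$ are its two connectors, and each connector is adjacent to at least $k+1=\frac{r-1}{2}$ core vertices, which is exactly the defining inequality of a $k$-fort (this is where the value $k=\frac{r-3}{2}$ enters on the lower side), so by Proposition \ref{k-fort} every $k$-PDS meets $N_G[F]$ for each such $F$. The essential difficulty is that this cannot be closed by a disjointness count: in any $r$-regular graph every fort $F$ satisfies $|N_G[F]|\ge r+1$, since for any $v\in F$ we have $N_G[v]\subseteq N_G[F]$ with $|N_G[v]|=r+1$; hence at most $\lfloor n/(r+1)\rfloor<m$ forts can have pairwise disjoint closed neighborhoods, and disjoint-fort counting alone can never reach the sharp constant $\frac{n+2}{r+1}$. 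I would therefore argue directly on the monitoring process: assuming a $k$-PDS $S$ with $|S|\le m-1$, combine the fort constraints of Proposition \ref{k-fort} with a pigeonhole argument around the cycle to locate a gadget whose seeds are too sparse, and then show that the propagation rule stalls there, leaving a core vertex permanently unobserved. Tracking this stall exactly around the cyclic seam, so as to produce the single extra unit encoded in the $+2$ and to match the upper bound with equality rather than up to an additive constant, is the crux of the argument.
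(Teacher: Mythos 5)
Your overall strategy (a necklace of clique gadgets, a $k$-fort in each gadget, Proposition \ref{k-fort} for the lower bound) is the same as the paper's, but as written the proposal has two genuine gaps, and you have flagged the second one yourself without closing it. First, the construction is not actually pinned down at the one place where everything happens: the seam that produces the $-2$. If each gadget is a clique on $r+1$ vertices and consecutive gadgets are joined through connectors in the obvious way, the closed neighborhoods of the $m$ forts are pairwise disjoint, the order is $(r+1)m$, and you only get $\gamma_{p,k}=n/(r+1)$, which does not contradict anything. The whole counterexample lives in how one connector pair is deleted and replaced by a direct gluing of the two end-cliques, and ``suppressing or identifying one connector pair'' is not a specification: you must say exactly which edges replace it (the paper uses a $2$-regular bipartite join $a_0^jb_q^j$, $a_0^jb_q^{j+1}$ between the two half-cliques $A_0$ and $B_q$) and then re-verify $r$-regularity and claw-freeness at that seam, since those are precisely the vertices whose neighborhoods are no longer a union of two cliques for free.

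Second, and more seriously, the lower bound is asserted but not proved. You correctly observe that disjoint-fort counting only yields $|S|\ge m-1$ because two of the $m$ fort neighborhoods necessarily overlap, and you then write that locating the stalled gadget and ``tracking this stall exactly around the cyclic seam \dots is the crux of the argument.'' That crux is the content of the observation; a proof must carry it out. The paper does so as follows: the $m-1$ pairwise disjoint closed fort neighborhoods partition everything except the seam cliques, so a hypothetical $k$-PDS $S$ of size $m-1$ must place exactly one vertex in each block and still hit the closed neighborhood of the remaining fort, which forces every vertex of $S$ into a connector pair $U_i$; one then checks directly that from such an $S$ the propagation of rule (2) monitors everything except the two seam half-cliques $A_0\cup B_q$, where it stalls because each of their vertices always has at least $k+1$ unobserved neighbors. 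Without this final explicit computation you have only $\gamma_{p,k}(G_{r,q})\ge \frac{n+2}{r+1}-1$, which is not enough to contradict Conjecture \ref{DorbConj}. So the proposal is a correct plan in the same spirit as the paper, but it is not yet a proof.
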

\begin{proof}

    We define $A_i=\{a_i^1,\cdots, a_i^{(r-1)/2}\}$, $B_i=\{b_i^1,\cdots, b_i^{(r-1)/2}\}$ and $U_i=\{u_i^1,u_i^2\}$ for each $i\in\{0,1,\cdots,q\}$. Then, we construct $G_{r,q}$ by the following steps. Firstly, let $V(G_{r,q})=(A_{0}\cup B_{0})\cup\left(\bigcup_{i=1}^q{(U_i\cup A_i\cup B_i)}\right)$. Secondly, add the edges such that $A_q\cup B_q$, $A_i\cup B_i$, $B_i\cup U_{i+1}$ and $U_{i+1}\cup A_{i+1}$ are cliques for each $i\in\{0,1,\cdots,q-1\}$. Finally, add the edges $a_0^jb_q^j$ and $a_0^jb_q^{j+1}$ for each  $j\in\{1,\cdots,\frac{r-1}{2}\}$, where $b_q^{\frac{r+1}{2}}=b_q^1$ (see Figures \ref{odd1}-\ref{odd3}).


    It is easy to check that $G_{r,q}$ is a connected $r$-regular claw-free graph of order $n=(q+1)(r+1)-2$. Let $k=\frac{r-3}{2}$.
    Since $\{a_0^1,\cdots,a_q^1\}$ is a $k$-PDS of $G_{r,q}$, we have $\gamma_{p,k}(G_{r,q})\le q+1$. On the other hand, let $S$ be a $k$-PDS of $G_{r,q}$. It is clear that $A_q$ is a $k$-fort and $B_i$ is also a $k$-fort for each $i\in \{0,\cdots,q-1\}$. By Propostion \ref{k-fort}, $|S\cap (A_q\cup B_q\cup U_{q})|\ge 1$ and $|S\cap (A_i\cup B_i\cup U_{i+1})|\ge 1$ for each $i\in \{0,\cdots,q-1\}$. It leads to $|S|\ge q$. Moreover, if $|S|=q$, then $|S\cap U_i|=1$ for each $i\in \{1,\cdots,q\}$. In this case, $P_{G_{r,q}}^\infty(S)=V\backslash (A_0\cup B_q)$, contradicting that $S$ is a $k$-PDS of $G_{r,q}$. Hence, $\gamma_{p,k}(G_{r,q})=q+1=\frac{n+2}{r+1}>\frac{n}{r+1}$.

\end{proof}
\begin{figure}[H]
    \centering
    \begin{minipage}[t]{0.32\textwidth}
    \centering
    \begin{tikzpicture}
        \foreach \i in {0,1}
        {
            \foreach \j in {1,2}
            {
                \coordinate (a\i\j) at (2*\i-0.5,2-\j);
                \node at (2*\i-0.5,2.9-1.6*\j) {$a_\i^\j$};
                \fill (a\i\j) circle[radius=2pt];
                \coordinate (b\i\j) at (2*\i-1.5,2-\j);
                \node at (2*\i-1.5,2.9-1.6*\j) {$b_\i^\j$};
                \fill (b\i\j) circle[radius=2pt];
            }
            \draw (a\i1) -- (a\i2);
            \draw (b\i1) -- (b\i2);
        }
        \foreach \i in {0,1}
        {
            \foreach \j in {1,2}
            {
                \foreach \k in {1,2}
                {
                    \draw (a\i\j) -- (b\i\k);
                }
            }
        }
        \foreach \i in {1}
        {
            \foreach \j in {1,2}
            {
                \coordinate (u\i\j) at (0,-0.9*\j);
                \node at (0,-1.5*\j+0.9) {$u_\i^\j$};
                \fill (u\i\j) circle[radius=2pt];
            }
            \draw (u\i1) -- (u\i2);
        }

        \foreach \k in {1,2}
        {
            \draw (a12) -- (u1\k);
            \draw (b02) -- (u1\k);
        }
        \draw (u11) .. controls (-2,-1) .. (b01);
        \draw (u12) .. controls (-2.3,-1) .. (b01);
        \draw (u11) .. controls (2,-1) .. (a11);
        \draw (u12) .. controls (2.3,-1) .. (a11);
        \foreach \j[evaluate={\k=int(\j+1)}] in {1}
        {
            \draw (a0\j) -- (b1\j);
            \draw (a0\j) -- (b1\k);
        }
        \draw (a02)--(b12);
        \draw (a02)--(b11);
    \end{tikzpicture}
    \caption{The graph $G_{5,1}$}\label{odd1}
    \end{minipage}
    \begin{minipage}[t]{0.32\textwidth}
        \centering
        \begin{tikzpicture}
            \foreach \i in {0,2}
            {
                \foreach \j in {1,2}
                {
                    \coordinate (a\i\j) at (\i-0.5,2-\j);
                    \node at (\i-0.5,2.9-1.6*\j) {$a_\i^\j$};
                    \fill (a\i\j) circle[radius=2pt];
                    \coordinate (b\i\j) at (\i-1.5,2-\j);
                    \node at (\i-1.5,2.9-1.6*\j) {$b_\i^\j$};
                    \fill (b\i\j) circle[radius=2pt];
                }
            }
            \foreach \j in {1,2}
            {
                \coordinate (a1\j) at (-0.4,-0.8*\j-0.2);
                \node at (-0.4,-1.4*\j+0.7) {$a_1^\j$};
                \fill (a1\j) circle[radius=2pt];
                \coordinate (b1\j) at (0.4,-0.8*\j-0.2);
                \node at (0.4,-1.4*\j+0.7) {$b_1^\j$};
                \fill (b1\j) circle[radius=2pt];

            }
            \foreach \i in {0,1,2}
            {
                \foreach \j in {1,2}
                {
                    \foreach \k in {1,2}
                    {
                        \draw (a\i\j) -- (b\i\k);
                    }
                }
                \draw (a\i1) -- (a\i2);
                \draw (b\i1) -- (b\i2);
            }
            \foreach \i in {1,2}
            {
                \foreach \j in {1,2}
                {
                    \coordinate (u\i\j) at (2.4*\i-3.6,-0.8*\j-0.2);
                    \node at (2.4*\i-3.6,-1.4*\j+0.7) {$u_\i^\j$};
                    \fill (u\i\j) circle[radius=2pt];
                }
                \draw (u\i1) -- (u\i2);
            }

            \foreach \k in {1,2}
            {
                \draw (a22) -- (u2\k);
                \draw (b02) -- (u1\k);
                \foreach \i in {1,2}
                {
                    \draw (a1\i) -- (u1\k);
                    \draw (b1\i) -- (u2\k);
                }
            }
            \draw (u11) .. controls (-2,-1) .. (b01);
            \draw (u12) .. controls (-2.3,-1) .. (b01);
            \draw (u21) .. controls (2,-1) .. (a21);
            \draw (u22) .. controls (2.3,-1) .. (a21);
            \foreach \j[evaluate={\k=int(\j+1)}] in {1}
            {
                \draw (a0\j) -- (b2\j);
                \draw (a0\j) -- (b2\k);
            }
            \draw (a02)--(b22);
            \draw (a02)--(b21);
        \end{tikzpicture}
        \caption{The graph $G_{5,2}$}
    \end{minipage}
    \begin{minipage}[t]{0.33\textwidth}
        \centering
        \begin{tikzpicture}
            \foreach \i in {0,2}
            {
                \foreach \j in {1,2}
                {
                    \coordinate (a\i\j) at (\i-0.5,2-\j);
                    \fill (a\i\j) circle[radius=2pt];
                    \coordinate (b\i\j) at (\i-1.5,2-\j);
                    \fill (b\i\j) circle[radius=2pt];
                }
            }

            \foreach \j in {1,2}
            {
                \coordinate (a1\j) at (-0.4,-0.8*\j-0.2);
                \node at (-0.4,-1.4*\j+0.7) {$a_1^\j$};
                \fill (a1\j) circle[radius=2pt];
                \coordinate (b1\j) at (0.4,-0.8*\j-0.2);
                \node at (0.4,-1.4*\j+0.7) {$b_{q}^\j$};
                \fill (b1\j) circle[radius=2pt];

            }
            \foreach \i in {0,1,2}
            {
                \draw (a\i1) -- (a\i2);
                \draw (b\i1) -- (b\i2);
            }
            \foreach \i in {0,2}
            {
                \foreach \j in {1,2}
                {
                    \foreach \k in {1,2}
                    {
                        \draw (a\i\j) -- (b\i\k);
                    }
                }
            }
            \node at (0,-1.4) {$\dots$};
            \foreach \i in {1,2}
            {
                \foreach \j in {1,2}
                {
                    \coordinate (u\i\j) at (2.4*\i-3.6,-0.8*\j-0.2);

                    \fill (u\i\j) circle[radius=2pt];
                }
                \draw (u\i1) -- (u\i2);
            }
            \foreach \j in {1,2}
            {
                \node at (-0.5,2.9-1.6*\j) {$a_0^\j$};
                \node at (1.5,2.9-1.6*\j) {$a_{q+1}^\j$};
                \node at (-1.5,2.9-1.6*\j) {$b_0^\j$};
                \node at (0.5,2.9-1.6*\j) {$b_{q+1}^\j$};
                \node at (-1.2,-1.4*\j+0.7) {$u_1^\j$};
                \node at (1.2,-1.4*\j+0.7) {$u_{q+1}^\j$};
            }
            \foreach \k in {1,2}
            {
                \draw (a22) -- (u2\k);
                \draw (b02) -- (u1\k);
                \foreach \i in {1,2}
                {
                    \draw (a1\i) -- (u1\k);
                    \draw (b1\i) -- (u2\k);
                }
            }
            \draw (u11) .. controls (-2,-1) .. (b01);
            \draw (u12) .. controls (-2.3,-1) .. (b01);
            \draw (u21) .. controls (2,-1) .. (a21);
            \draw (u22) .. controls (2.3,-1) .. (a21);
            \foreach \j[evaluate={\k=int(\j+1)}] in {1}
            {
                \draw (a0\j) -- (b2\j);
                \draw (a0\j) -- (b2\k);
            }
            \draw (a02)--(b22);
            \draw (a02)--(b21);
        \end{tikzpicture}
        \caption{The graph $G_{5,q+1}$}\label{odd3}
    \end{minipage}
\end{figure}
\begin{observation}\label{even}
     For each even $r\ge 4$ and $q\ge 1$, there exists a connected claw-free $r$-regular graph $G_{r,q}$ of order $n=|V(G_{r,q})|$ such that $\gamma_{ p,\frac{r-2}{2}}(G_{r,q})=\frac{n+1}{r+1}>\frac{n}{r+1}$.
\end{observation}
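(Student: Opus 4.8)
The plan is to mimic the construction and argument of Observation~\ref{odd}, adjusting the block sizes to the parity of $r$. Write $k=\frac{r-2}{2}$, so $k+1=\frac r2$. I would define $A_i=\{a_i^1,\dots,a_i^{r/2}\}$ and $B_i=\{b_i^1,\dots,b_i^{r/2}\}$ for $i\in\{0,1,\dots,q\}$, together with singletons $U_i=\{u_i\}$ for $i\in\{1,\dots,q\}$, and set $V(G_{r,q})=(A_0\cup B_0)\cup\bigcup_{i=1}^q(U_i\cup A_i\cup B_i)$. The edges are added so that each $A_i\cup B_i$ is a clique, each $B_i\cup U_{i+1}$ and each $U_{i+1}\cup A_{i+1}$ is a clique for $i\in\{0,\dots,q-1\}$, and finally a single ``matching'' at the seam, $a_0^jb_q^j$ for $j\in\{1,\dots,r/2\}$ (replacing the doubled connection used for odd $r$, since now each seam vertex needs only one extra edge). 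A direct degree count gives $r$-regularity: an interior block vertex has $r-1$ clique-neighbours plus one connector or seam edge, while each $u_i$ sees exactly $\frac r2$ vertices of $B_{i-1}$ and $\frac r2$ of $A_i$. Claw-freeness follows because the neighbourhood of every vertex is a union of two cliques (the block $A_i\cup B_i$ together with either a connector $u_i$, a seam partner $b_q^j$ or $a_0^j$, or, in the case of $u_i$, the two blocks $B_{i-1}$ and $A_i$), so its independence number is at most $2$. Counting vertices gives $n=(q+1)r+q=q(r+1)+r$, hence $n+1=(q+1)(r+1)$ and $\frac{n+1}{r+1}=q+1$.

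For the upper bound I would check that $S=\{a_0^1,a_1^1,\dots,a_q^1\}$ already dominates $G_{r,q}$: the vertex $a_i^1$ dominates the whole clique $A_i\cup B_i$ and, for $i\ge 1$, the connector $u_i$ as well. Thus $P^0_{G_{r,q}}(S)=N_{G_{r,q}}[S]=V$, so $S$ is a $k$-PDS and $\gamma_{p,k}(G_{r,q})\le q+1$.

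For the lower bound I would first exhibit the forts. Since $|A_i|=|B_i|=\frac r2=k+1$, every $A_i$ with $1\le i\le q$ and every $B_i$ with $0\le i\le q-1$ is a $k$-fort: each external neighbour lies in the adjacent block or is a connector, and is joined to all $\frac r2=k+1$ vertices of the fort. (By contrast $A_0$ and $B_q$ are \emph{not} forts, because the seam vertices meet them in a single edge.) Let $S$ be a $k$-PDS and write $s_i=|S\cap(A_i\cup B_i)|$ and $t_i=|S\cap U_i|$. Applying Proposition~\ref{k-fort} to these forts yields $s_i+t_i\ge 1$ for $1\le i\le q$ and $s_i+t_{i+1}\ge 1$ for $0\le i\le q-1$. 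Reading the variables in the order $s_0,t_1,s_1,t_2,\dots,t_q,s_q$, these are exactly the edge constraints of the path on $2q+1$ vertices, so $|S|=\sum_i s_i+\sum_i t_i$ is at least the minimum vertex cover of that path, namely $q$. Moreover, since an odd path has a \emph{unique} minimum vertex cover, its even-indexed vertices, equality forces $t_1=\dots=t_q=1$ and all $s_i=0$, that is, $S=\{u_1,\dots,u_q\}$.

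It then remains to rule out $S=\{u_1,\dots,u_q\}$. Here $P^0(S)=\bigcup_i N_{G_{r,q}}[u_i]=V\setminus(A_0\cup B_q)$, since no $u_i$ is adjacent to $A_0$ or to $B_q$. No propagation step can enlarge this set: the only monitored vertices meeting the unmonitored region are the vertices of $B_0$ (each adjacent to all of $A_0$) and the vertices of $A_q$ (each adjacent to all of $B_q$), and in both cases the number of unmonitored neighbours equals $\frac r2=k+1>k$, so Observation Rule~2 never fires. Hence $P^\infty(S)=V\setminus(A_0\cup B_q)\ne V$, contradicting that $S$ is a $k$-PDS; therefore $|S|\ge q+1$, and with the upper bound we get $\gamma_{p,\frac{r-2}{2}}(G_{r,q})=q+1=\frac{n+1}{r+1}>\frac{n}{r+1}$. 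The delicate point, exactly as in Observation~\ref{odd}, is this refinement: pinning down that a hypothetical size-$q$ solution must be the ``all connectors'' set $\{u_1,\dots,u_q\}$ (via uniqueness of the minimum vertex cover of the odd path) and then showing that this set stalls, which works precisely because the seam was built so that $A_0$ and $B_q$ each retain $k+1$ neighbours inside the adjacent block.
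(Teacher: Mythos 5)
Your construction is exactly the graph $G_{r,q}$ the paper defines (blocks $A_i,B_i$ of size $r/2$, single connectors $u_i$, and the seam matching $a_0^jb_q^j$), and your argument is the same fort-based one the paper invokes by writing ``similar to the proof of Observation~\ref{odd}.'' The only difference is that you make explicit, via the uniqueness of the minimum vertex cover of an odd path, the step the paper merely asserts --- that a hypothetical $k$-PDS of size $q$ must be $\{u_1,\dots,u_q\}$, which then stalls on $A_0\cup B_q$ --- so your write-up is a correct and slightly more detailed rendering of the same proof.
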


\begin{proof}


    We consider a graph $G_{r,q}$ which was presented by Lu et al. in \cite{Lu2018} and was noted by $Q_{r,k}$ in their paper. Let $A_i=\{a_i^1,\cdots, a_i^{r/2}\}$, $B_i=\{b_i^1,\cdots, b_i^{r/2}\}$ and $U_i=\{u_i\}$ for each $i\in\{0,1,\cdots,q\}$. Now we redefine $G_{r,q}$ by the following steps. Firstly, let $V(G_{r,q})=(A_{0}\cup B_{0})\cup\left(\bigcup_{i=1}^q{(U_i\cup A_i\cup B_i)}\right)$. Secondly, add the edges such that $A_q\cup B_q$, $A_i\cup B_i$, $B_i\cup U_{i+1}$ and $U_{i+1}\cup A_{i+1}$ are cliques for each $i\in\{0,\cdots,q-1\}$. Finally, add the edges $a_0^jb_q^j$ for each  $j\in\{1,\cdots,\frac{r}{2}\}$ (see Figures \ref{even1}-\ref{even3}).

    It is easy to check that $G_{r,q}$ is a connected claw-free $r$-regular graph. Similar to the proof of Observation \ref{odd}, we have $\gamma_{p,\frac{r-2}{2}}(G_{r,q})=q+1=\frac{n+1}{r+1}>\frac{n}{r+1}$.
\end{proof}
\begin{figure}[H]
    \centering
    \begin{minipage}[t]{0.32\textwidth}
    \centering
    \begin{tikzpicture}
        \foreach \i in {0,1}
        {
            \foreach \j in {1,2}
            {
                \coordinate (a\i\j) at (2*\i-0.5,2-\j);
                \node at (2*\i-0.5,2.9-1.6*\j) {$a_\i^\j$};
                \fill (a\i\j) circle[radius=2pt];
                \coordinate (b\i\j) at (2*\i-1.5,2-\j);
                \node at (2*\i-1.5,2.9-1.6*\j) {$b_\i^\j$};
                \fill (b\i\j) circle[radius=2pt];
            }
            \draw (a\i1) -- (a\i2);
            \draw (b\i1) -- (b\i2);
        }
        \foreach \i in {0,1}
        {
            \foreach \j in {1,2}
            {
                \foreach \k in {1,2}
                {
                    \draw (a\i\j) -- (b\i\k);
                }
            }
        }
        \coordinate (u11) at (0,-1.4);
        \node at (0,-1.7) {$u_1$};
        \fill (u11) circle[radius=2pt];
        \draw (a12) -- (u11);
        \draw (b02) -- (u11);
        \draw (u11) .. controls (-2,-1) .. (b01);
        \draw (u11) .. controls (2,-1) .. (a11);
        \draw (a01) -- (b21);
        \draw (a02)--(b22);
        \node at (0,-2.3) { };
    \end{tikzpicture}
    \caption{The graph $G_{4,1}$}\label{even1}
    \end{minipage}
    \begin{minipage}[t]{0.32\textwidth}
        \centering
        \begin{tikzpicture}
            \foreach \i in {0,2}
            {
                \foreach \j in {1,2}
                {
                    \coordinate (a\i\j) at (\i-0.5,2-\j);
                    \node at (\i-0.5,2.9-1.6*\j) {$a_\i^\j$};
                    \fill (a\i\j) circle[radius=2pt];
                    \coordinate (b\i\j) at (\i-1.5,2-\j);
                    \node at (\i-1.5,2.9-1.6*\j) {$b_\i^\j$};
                    \fill (b\i\j) circle[radius=2pt];
                }
            }
            \foreach \j in {1,2}
            {
                \coordinate (a1\j) at (-0.4,-0.8*\j-0.2);
                \node at (-0.4,-1.4*\j+0.7) {$a_1^\j$};
                \fill (a1\j) circle[radius=2pt];
                \coordinate (b1\j) at (0.4,-0.8*\j-0.2);
                \node at (0.4,-1.4*\j+0.7) {$b_1^\j$};
                \fill (b1\j) circle[radius=2pt];

            }
            \foreach \i in {0,1,2}
            {
                \foreach \j in {1,2}
                {
                    \foreach \k in {1,2}
                    {
                        \draw (a\i\j) -- (b\i\k);
                    }
                }
                \draw (a\i1) -- (a\i2);
                \draw (b\i1) -- (b\i2);
            }
            \foreach \i in {1,2}
            {
                \coordinate (u\i1) at (2.4*\i-3.6,-1.4);
                \node at (2.4*\i-3.6,-1.7) {$u_\i$};
                \fill (u\i1) circle[radius=2pt];
            }

            \foreach \k in {1}
            {
                \draw (a22) -- (u2\k);
                \draw (b02) -- (u1\k);
                \foreach \i in {1,2}
                {
                    \draw (a1\i) -- (u1\k);
                    \draw (b1\i) -- (u2\k);
                }
            }
            \draw (u11) .. controls (-2,-1) .. (b01);
            \draw (u21) .. controls (2,-1) .. (a21);
            \draw (a01) -- (b21);
            \draw (a02)--(b22);

        \end{tikzpicture}
        \caption{The graph $G_{4,2}$}
    \end{minipage}
    \begin{minipage}[t]{0.33\textwidth}
        \centering
        \begin{tikzpicture}
            \foreach \i in {0,4}
            {
                \foreach \j in {1,2}
                {
                    \coordinate (a\i\j) at (\i/2-0.5,2-\j);
                    \fill (a\i\j) circle[radius=2pt];
                    \coordinate (b\i\j) at (\i/2-1.5,2-\j);
                    \fill (b\i\j) circle[radius=2pt];
                }
            }
            \foreach \i in {1,2}
            {
                \foreach \j in {1,2}
                {
                    \coordinate (a\i\j) at (-2+0.22*1.73+1.19*\i-1.19,-0.8*\j-0.2);
                    \node at (-2+0.22*1.73+1.19*\i-1.19,-1.4*\j+0.7) {$a_\i^\j$};
                    \fill (a\i\j) circle[radius=2pt];
                    \coordinate (b\i\j) at (-2+0.22*1.73+0.44+1.19*\i-1.19,-0.8*\j-0.2);
                    \node at (-2+0.22*1.73+0.44+1.19*\i-1.19,-1.4*\j+0.7) {$b_\i^\j$};
                    \fill (b\i\j) circle[radius=2pt];
                }
            }
            \foreach \j in {1,2}
            {
                \coordinate (a3\j) at (-2+0.22*1.73+1.19*2+0.44,-0.8*\j-0.2);
                \node at (-2+0.22*1.73+1.19*2+0.44,-1.4*\j+0.7) {$a_{q}^\j$};
                \fill (a3\j) circle[radius=2pt];
                \coordinate (b3\j) at (-2+0.22*1.73+0.44+1.19*2+0.44,-0.8*\j-0.2);
                \node at (-2+0.22*1.73+0.44+1.19*2+0.44,-1.4*\j+0.7) { $b_{q}^\j$};
                \fill (b3\j) circle[radius=2pt];
            }
            \foreach \i in {0,...,4}
            {
                \draw (a\i1) -- (a\i2);
                \draw (b\i1) -- (b\i2);
            }
            \foreach \i in {0,...,4}
            {
                \foreach \j in {1,2}
                {
                    \foreach \k in {1,2}
                    {
                        \draw (a\i\j) -- (b\i\k);
                    }
                }
            }
            \foreach \i in {1,2,3}
            {
                \coordinate (u\i1) at (-2+1.19*\i-1.19,-1.4);
                \node at (-2+1.19*\i-1.19,-1.7) {$u_\i$};
                \fill (u\i1) circle[radius=2pt];
            }
            \foreach \i in {4,5}
            {
                \coordinate (u\i1) at (2+1.19*\i-1.19*5,-1.4);
                \fill (u\i1) circle[radius=2pt];
            }
            \draw[dotted] (u31) -- (u41);
            \node at (2-1.19+0.05,-1.7) {$u_q$};
            \node at (2,-1.7) {$u_{q+1}$};
            \foreach \j in {1,2}
            {
                \node at (-0.5,2.9-1.6*\j) {$a_0^\j$};
                \node at (1.5,2.9-1.6*\j) {$a_{q+1}^\j$};
                \node at (-1.5,2.9-1.6*\j) {$b_0^\j$};
                \node at (0.5,2.9-1.6*\j) {$b_{q+1}^\j$};
            }
            \foreach \k in {1,2}
            {
                \foreach \i in {1,2}
                {
                    \draw (a\i\k) -- (u\i1);
                }
                \foreach[evaluate={\j=int(\i-1)}] \i in {1,2,3}
                {
                    \draw (b\j\k) -- (u\i1);
                }
                \draw (b3\k) -- (u51);
                \draw (a4\k) -- (u51);
                \draw (a3\k) -- (u41);
            }

            \draw (a01) -- (b41);
            \draw (a02)--(b42);
        \end{tikzpicture}
        \caption{The graph $G_{4,q+1}$}\label{even3}
    \end{minipage}
\end{figure}
Hence, we will consider  Conjecture \ref{DorbConj} when $G$ is a connected claw-free $r$-regular graph and $k\ge\lfloor\frac{r}{2}\rfloor$. It means that $k\ge \frac{r-1}{2}$. If we let $r=k+l+1$, we have $k\ge\frac{k+l}{2}$, implying that $k\ge l$. Chang et al. \cite{chang2012generalized} studied the case that $l=1$. We further studied the cases $l=2$ and $l=3$  by proving Theorem \ref{MainResult}.

If the statement of Theorem \ref{MainResult} fails, then we suppose that $G$ is a counterexample with minimal $|V(G)|$, i.e, $G$ is a connected claw-free $(k+l+1)$-regular graph of minimal order $n$ and $\gamma_{p,k}(G)>\frac{n}{k+l+2}$ for $l\in\{2,3\}$ and $k\ge l$.

Before giving the proof of Theorem \ref{MainResult}, we define an important structure, which is an $L$-configuration in $G$.
\begin{definition}($L$-configuration).
    The subgraph $H\cong G[N[L]]$ is an $L$-configuration if $L$ is both a clique and a $k$-fort of $G$.

\end{definition}

Let $j\le k$ be a positive integer and $A_j$ be  the graph obtained from $K_{k+j+2}$ by removing $j$ edges which share a common vertex in $K_{k+j+2}$ (see Figures  \ref{a2}-\ref{a3}). Remark that $A_j$ is an $L$-configuration in $G$.
\vskip 1em

\begin{figure}[H]
    \begin{minipage}[t]{0.49\textwidth}
    \centering
    \begin{tikzpicture}
        \foreach \i in {1,...,5}
        {
            \coordinate (a\i) at (72*\i-54:1);
        }
        \coordinate (a6) at (0,2);
        \foreach \i in {1,...,6}
        {
            \fill (a\i) circle[radius=2pt];
        }
        \foreach \i in {1,2,3}
        {
            \draw (a6) -- (a\i);
        }
        \foreach \i in {1,...,4}
        {
            \foreach \j in {\i,...,5}
            {
                \draw (a\i) -- (a\j);
            }
        }
 \end{tikzpicture}
    \caption{$A_2$ for $k=2$}\label{a2}
    \end{minipage}
    \begin{minipage}[t]{0.49\textwidth}
    \centering
    \begin{tikzpicture}
        \foreach \i in {1,...,7}
        {
            \coordinate (a\i) at (360/7*\i+12.86:1);
        }
        \coordinate (a8) at (0,2);
        \foreach \i in {1,...,8}
        {
            \fill (a\i) circle[radius=2pt];
        }
        \foreach \i in {1,2,3,7}
        {
            \draw (a8) -- (a\i);
        }
        \foreach \i in {1,...,6}
        {
            \foreach \j in {\i,...,7}
            {
                \draw (a\i) -- (a\j);
            }
        }

    \end{tikzpicture}
    \caption{$A_3$ for $k=3$}\label{a3}
    \end{minipage}
\end{figure}


Then, we present three useful lemmas.
\begin{lemma}\label{p5}
    Let $H$ be an $L$-configuration of $G$. If $S\subseteq L$ and $|S|\ge |L|-k$, then $N[S]=V(H)$. 
\end{lemma}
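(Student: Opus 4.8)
The plan is to prove the set equality by establishing the two inclusions $N[S]\subseteq V(H)$ and $V(H)\subseteq N[S]$ separately. The forward inclusion is immediate, and the reverse inclusion I would split according to the two defining properties of an $L$-configuration, namely that $L$ is simultaneously a clique and a $k$-fort; each property will handle a different part of $V(H)=N[L]=L\cup(N_G(L)\setminus L)$.

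First I would record that $S$ is nonempty. Since $L$ is a $k$-fort we have $|L|\ge k+1$, so $|S|\ge |L|-k\ge 1$; this is what lets me invoke the clique property below. The inclusion $N[S]\subseteq V(H)$ is then trivial: as $S\subseteq L$ we have $N_G[S]\subseteq N_G[L]=V(H)$.

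For the reverse inclusion I would show $L\subseteq N[S]$ and $N_G(L)\setminus L\subseteq N[S]$. The first is just the clique property: any $v\in L$ either lies in $S$, or lies in the clique $L$ together with some $s\in S$ and is therefore adjacent to $s$, so $v\in N_G[S]$. The second part is the crux and uses the $k$-fort property together with the size hypothesis. The key observation is that $|L\setminus S|=|L|-|S|\le k$. Now take any external neighbor $w\in N_G(L)\setminus L$; since $L$ is a $k$-fort, $w$ has at least $k+1$ neighbors in $L$, but at most $k$ of these can lie in $L\setminus S$, so $w$ must be adjacent to at least one vertex of $S$, giving $w\in N_G(S)\subseteq N[S]$.

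Combining $L\subseteq N[S]$ with $N_G(L)\setminus L\subseteq N[S]$ yields $V(H)=L\cup(N_G(L)\setminus L)\subseteq N[S]$, and together with the forward inclusion this gives $N[S]=V(H)$. I do not expect a serious obstacle here; the only point needing care is the pigeonhole step that converts the bound $|S|\ge|L|-k$ into the statement that every external neighbor retains a neighbor inside $S$. That is precisely where the hypothesis $|L\setminus S|\le k$ interacts with the $k$-fort condition ``at least $k+1$ neighbors in $L$'', and getting the inequalities to line up ($(k+1)-k\ge 1$) is the whole content of the lemma.
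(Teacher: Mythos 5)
Your proof is correct and follows essentially the same route as the paper: the inclusion $L\subseteq N[S]\subseteq V(H)$ via the clique property and nonemptiness of $S$, followed by the pigeonhole step that each external vertex of $N_G(L)\setminus L$ has at least $k+1$ neighbors in $L$ while $|L\setminus S|\le k$. The paper's version is just terser; no substantive difference.
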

\begin{proof}
    Suppose that $S\subseteq L$ and $|S|\ge |L|-k$. It is clear that $L\subseteq N[S]\subseteq V(H)$. For each $v\in V(H)\setminus L$, we have $|N_L(v)\cap S|\ge 1$ since $L$ is a $k$-fort of $G$ and $|L|-|S|\le k$.  Hence, $v\in N[S]$, implying that $V(H)\subseteq N[S]$.
\end{proof}
\begin{lemma}\label{lf}
    Let $H$ be an $L$-configuration of $G$ and $H'$ be an $L'$-configuration of $G$. If $V(H)\cap V(H')\neq \emptyset$, then $V(H)=V(H')$.
\end{lemma}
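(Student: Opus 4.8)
The plan is to leverage the common vertex to force the two clique-forts $L$ and $L'$ to share most of their vertices, and then to read off $V(H)=V(H')$ from Lemma~\ref{p5}. Throughout I use two structural facts about an $L$-configuration $H=G[N[L]]$: first, $k+1\le|L|\le k+l+2$ (the lower bound because $L$ is a $k$-fort, the upper because $L$ is a clique in the $(k+l+1)$-regular graph $G$); second, every $v\in L$ has $N[v]\subseteq N[L]=V(H)$, so $v$ has exactly $k+l+2-|L|$ neighbours in $V(H)\setminus L$, whence a double count gives $|N(L)\setminus L|\le (k+l+2-|L|)|L|/(k+1)$. The case $G\cong K_{k+l+2}$ is immediate, since then $N[L]=N[L']=V(G)$; so I assume $G\not\cong K_{k+l+2}$ for the rest.

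First I would show $L\cap L'\neq\emptyset$. Fix $w\in V(H)\cap V(H')$. Whether $w\in L$ or $w\in N(L)\setminus L$, the clique/fort structure gives $|N[w]\cap L|\ge k+1$, and symmetrically $|N[w]\cap L'|\ge k+1$; as $|N[w]|=k+l+2$, inclusion--exclusion yields $|N[w]\cap L\cap L'|\ge 2(k+1)-(k+l+2)=k-l\ge0$. If $k>l$ this is already positive. If $k=l$ it can vanish only in the fully tight situation where $N[w]$ is the disjoint union of $N[w]\cap L$ and $N[w]\cap L'$, each of size $k+1$; there, using that every $L$-vertex has all its neighbours inside $V(H)$ together with the bound on $|N(L)\setminus L|$ above, one finds that $V(H)$ induces a complete graph $K_{k+l+2}$ that is a whole component of $G$, contradicting $G\not\cong K_{k+l+2}$. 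Hence $L\cap L'\neq\emptyset$; fix $z\in L\cap L'$.

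Next I would promote this to a quantitative overlap. Since $z$ lies in both cliques, $L\cup L'\subseteq N[z]$, so $|L\cup L'|\le k+l+2$ and $|L\cap L'|\ge|L|+|L'|-(k+l+2)$. If $|L'|\ge l+2$ this already gives $|L\cap L'|\ge|L|-k$. The only other possibility is $|L'|\le l+1$, which with $|L'|\ge k+1\ge l+1$ forces $k=l$ and $|L'|=k+1$; in that case I sharpen the estimate by ruling out $N[z]=L\cup L'$: if equality held, then each $x\in L\setminus L'$ would be adjacent to $z\in L'$, hence lie in $N(L')\setminus L'$, hence (being adjacent to at least $k+1=|L'|$ vertices of $L'$) be adjacent to all of $L'$; this makes $L\cup L'$ a clique equal to $N[z]$, i.e.\ a $K_{k+l+2}$ component, again impossible. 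Thus $|L\cup L'|\le k+l+1$ and $|L\cap L'|\ge|L|-k$. By the symmetric argument $|L\cap L'|\ge|L'|-k$. Writing $S=L\cap L'$, Lemma~\ref{p5} applied inside $H$ (as $S\subseteq L$, $|S|\ge|L|-k$) gives $N[S]=V(H)$, and applied inside $H'$ (as $S\subseteq L'$, $|S|\ge|L'|-k$) gives $N[S]=V(H')$; therefore $V(H)=N[L\cap L']=V(H')$.

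The hard part is the boundary regime $k=l$, where both the inclusion--exclusion count of the first step and the $N[z]$-size bound of the second step are exactly tight. In each case the resolution is the same in spirit: perfect tightness can occur only when the vertices involved already induce a complete graph on $k+l+2$ vertices, which by connectivity would be all of $G$; excluding $G\cong K_{k+l+2}$ therefore buys the strict inequality needed. Once a single common vertex $z$ is produced and the overlap is pushed past the threshold $|L|-k$ (resp.\ $|L'|-k$), the identification $V(H)=V(H')$ is purely mechanical via Lemma~\ref{p5}.
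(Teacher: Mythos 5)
Your proposal is correct and follows essentially the same route as the paper: inclusion--exclusion at a common vertex combined with the $k$-fort bound $|N[w]\cap L|\ge k+1$ to force $L\cap L'\neq\emptyset$, a ``tightness implies $G\cong K_{k+l+2}$'' argument to break the boundary case $k=l$, the resulting estimate $|L\cap L'|\ge\max\{|L|-k,\,|L'|-k\}$, and finally Lemma~\ref{p5} applied to $S=L\cap L'$. The only difference is organizational (the paper runs one unified computation with the quantity $|N[u]\cap L|+|N[u]\cap L'|-|N[u]\cap(L\cup L')|$, while you split the nonemptiness and the quantitative overlap into two stages, each with its own tightness argument), which does not change the substance.
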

\begin{proof}
    For each $u\in V(H)\cap V(H')$, we define $S_u=N[u]\cap (L \cap L')$. Then $|S_u|=|N[u]\cap L|+|N[u]\cap L'|-|N[u]\cap (L \cup L')| $ according to the inclusion and exclusion principle.

    It is clear that $|L|-|N[u]\cap (L \cup L')|\ge (k+1)-(k+l+2)\ge -k-1$. We claim that the equation can't hold. Otherwise, suppose the equation holds. Then, we have $|L|=k+1$ and $N[u]\subseteq L \cup L'$. Without loss of generality, assume $u\in L$, and so $N[u]\backslash L\subseteq N[L]\setminus L$.  Since $L$ is a $k$-fort, $N(v)\cap L=L$ for each $v\in N[u]\backslash L$. Since $L'$ is a clique and $N[u]\setminus L\subseteq L'$, we have $N[u]\setminus L$ is a clique. It means that $N[u]$ is a clique, and so $G\cong K_{k+l+2}$, contradicting that $G$ is a counterexample. So, $|L|-|N[u]\cap (L \cup L')|\ge -k$.

  We claim that $L\cap L'\neq \emptyset$. Otherwise, suppose that $L\cap L'=\emptyset$. If $u\notin L \cup L'$ for each $u\in V(H)\cap V(H')$, then $d_G(u)\ge |L|+|L'|\ge 2(k+1)>k+l+1$, a contradiction.  Without loss of generality, we assume $u\in L$. Then $|S_u|=|N[u]\cap L'|+|L|-|N[u]\cap(L\cup L')|\ge |N[u]\cap L'|-k\ge 1$. It means that $|L\cap L'|\ge 1$, a contradiction. Hence, $L\cap L'\neq \emptyset$.

    Let $v\in L\cap L'$. Then $|S_v|= |L|+|L'|-|N[v]\cap (L \cup L')|$. It means that $|S_v|\ge |L|-k$ and $|S_v|\ge |L'|-k$. By Lemma \ref{p5}, $V(H)=N[S_v]=V(H')$.
\end{proof}
\begin{lemma}\label{lf2}
    Let $H$ be an $L$-configuration of $G$. Then, we have $V(H)\subseteq P^\infty(u)$ for each $u\in L$.
\end{lemma}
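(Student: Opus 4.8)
The plan is to run the monitoring process from the single vertex $u$ and show that it already captures all of $V(H)=N[L]$ in at most one propagation step. Everything rests on splitting $V(H)$ into the clique $L$ and the set $W:=V(H)\setminus L=N(L)\setminus L$, and on exploiting the regularity of $G$ to control how many neighbours each clique vertex has outside $L$. First I would record the trivial initialization: since $L$ is a clique and $u\in L$, every other vertex of $L$ is adjacent to $u$, so $L\subseteq N[u]=P^0(u)$. Hence after step $0$ the whole clique is monitored and it only remains to pull $W$ into $P^\infty(u)$.

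The key observation is a degree count. For every $v\in L$ we have $N(v)\subseteq N(L)\subseteq V(H)$, so \emph{all} neighbours of $v$ lie inside $V(H)$; exactly $|L|-1$ of them sit in the clique $L$, and the remaining $m:=k+l+2-|L|$ sit in $W$. Because $L$ is a $k$-fort we have $|L|\ge k+1$, and combined with $l\le k$ this yields the crucial bound $m\le l+1\le k+1$. Thus each clique vertex has only a handful of neighbours in $W$, and this will be enough to force them once $L$ is monitored.

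I would then finish by a short dichotomy on $m$. If $m\le k$, then for each $v\in L$ the set $N[v]\setminus P^0(u)$ is contained in $v$'s $W$-neighbours, so $|N[v]\setminus P^0(u)|\le m\le k$; the propagation rule therefore gives $N[v]\subseteq P^1(u)$. Taking the union over all $v\in L$ and using that every vertex of $W$ has a neighbour in $L$, we obtain $W\subseteq P^1(u)$, whence $V(H)=L\cup W\subseteq P^1(u)\subseteq P^\infty(u)$. (This is exactly the ``force from the whole clique'' incarnation of Lemma \ref{p5}.) The remaining case is $m=k+1$, i.e. $|L|=l+1$; since $|L|\ge k+1\ge l+1$, this forces $k=l$ and $|L|=k+1$. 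Here each $w\in W$ satisfies $d_L(w)\ge k+1=|L|$ by the fort condition, so $w$ is adjacent to every vertex of $L$, in particular to $u$; thus $W\subseteq N[u]=P^0(u)$ and we are done without any propagation.

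The one delicate point is precisely this boundary case $m=k+1$: there a single clique vertex would have $k+1$ unmonitored neighbours and so cannot propagate on its own. The resolution is that exactly in that regime ($|L|=k+1$) the $k$-fort condition forces every outside vertex to see the entire clique, so $u$ already dominates $V(H)$ outright. I would also note that, reassuringly, the argument never uses claw-freeness—only the regularity of $G$ and the clique/$k$-fort structure of $L$—so it applies to any $L$-configuration.
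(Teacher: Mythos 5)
Your proof is correct and takes essentially the same route as the paper's (much terser) argument: when the clique is large enough, regularity plus $l\le k$ lets every vertex of $L$ propagate in one step, and in the boundary case $|L|=k+1$ the $k$-fort condition forces $N[u]=V(H)$ outright (the paper invokes its Lemma \ref{p5} here). Your case split at $m=k+1$ versus the paper's at $|L|=k+1$ is a cosmetic difference; both arguments are sound.
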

\begin{proof}
    Let $u\in L$. If $|L|= k+1$, then $N[u]=V(H)$ by Lemma \ref{p5}, implying that $V(H)\subseteq P^\infty(u)$. Now suppose that  $|L|\ge k+2$. Since $G$ is a $(k+l+1)$-regular graph and $l\le k$, $V(H)\subseteq P^\infty(u)$.
\end{proof}
We give the following method to choose a vertex subset $\mathcal{P}_0$ for $G$. First, let $\mathcal{P}_0=\emptyset$. Then, we process the following step. If $G$ contains an $L$-configuration and none vertex of $L$ is contained in $ P^\infty(\mathcal{P}_0)$, then we add one vertex of $L$ to $\mathcal{P}_0$. Process the step till $G$ contains no such an $L$-configuration.

By Lemmas \ref{lf} and \ref{lf2}, it is clear that $\mathcal{P}_0$ is a packing of $G$. We extend the packing $\mathcal{P}_0$ of $G$ to a maximal packing and denote the resulting packing by $S_0$.

\begin{lemma}\label{mainlma}
For $l\in\{2,3\}$ and $k\ge l$, $G$ has a sequence $S_0,S_1,\cdots,S_q$ such that the following holds:

(a) For all $t\ge 0$, $|S_{t+1}|=|S_t|+1$ and $|P^\infty(S_{t+1})|\geq|P^\infty(S_t)|+k+l+2$.

(b) $P^\infty(S_q)=V(G)$.
\end{lemma}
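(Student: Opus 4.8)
The plan is to build the sequence greedily and reduce everything to a single \emph{augmenting claim}: whenever $W:=P^\infty(S_t)\neq V(G)$, there is a vertex $v$ with
\[
|P^\infty(S_t\cup\{v\})|\ge |P^\infty(S_t)|+(k+l+2).
\]
Granting this, I set $S_{t+1}=S_t\cup\{v\}$; monotonicity of $P^\infty$ under enlarging the seed set gives $P^\infty(S_t)\subseteq P^\infty(S_{t+1})$, so property (a) holds, and since each step enlarges the monitored set by at least $k+l+2\ge 1$ vertices while $V(G)$ is finite, the process terminates at some $S_q$ with $P^\infty(S_q)=V(G)$, giving (b). Thus the whole content sits in the augmenting claim; note that it automatically forces $|V\setminus W|\ge k+l+2$ at every step, since all newly monitored vertices lie in $V\setminus W$.

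First I would record the structure of $F:=V\setminus W$. Because $W=P^\infty(S_t)$ is closed under the propagation rule, every vertex of $W$ that is adjacent to $F$ has at least $k+1$ neighbours in $F$; hence $F$ is a nonempty $k$-fort. The decisive extra property is that $F$ contains no $L$-configuration: if some clique $L\subseteq F$ were a $k$-fort, then $G[N[L]]$ would be an $L$-configuration, and the construction of $\mathcal{P}_0$ (which is carried along in $\mathcal{P}_0\subseteq S_0\subseteq S_t$) together with Lemma \ref{lf2} would force $V(G[N[L]])\subseteq P^\infty(\mathcal{P}_0)\subseteq W$, contradicting $L\subseteq F$. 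So $F$ is a $k$-fort with no clique $k$-fort inside it; in particular the extremal configurations $A_j$ cannot occur inside $F$. This is exactly the structural input I would combine with claw-freeness and $(k+l+1)$-regularity.

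To prove the augmenting claim I would split on the boundary structure of $F$. In the easy case $F$ contains an \emph{interior} vertex $v$ (with $N[v]\subseteq F$); then adding $v$ monitors all $k+l+2$ vertices of $N[v]$, every one previously unmonitored, and we are done. In the remaining case every vertex of $F$ has a neighbour in $W$, and I would pick (by connectivity) a boundary vertex $v\in W$ with $d_F(v)$ maximum, so that $k+1\le d_F(v)\le k+l+1$ since $F$ is a $k$-fort; adding $v$ monitors its $d_F(v)$ neighbours in $F$, and I would then show that claw-freeness forces a cascade of further propagations inside $F$ reaching at least $k+l+2$ new vertices in total. The mechanism is that claw-freeness at the newly monitored $F$-vertices bounds their remaining unmonitored neighbourhoods by $k$, so that OR~2 fires; the hypotheses $l\in\{2,3\}$ and $k\ge l$ keep the number of boundary types finite and guarantee the cascade does not stall before $k+l+2$ vertices are reached, while the absence of a clique $k$-fort (no $A_j$) inside $F$ is precisely what rules out the configurations on which propagation would otherwise stop one vertex short.

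The main obstacle is this last case: converting claw-freeness and $(k+l+1)$-regularity into a guarantee that adding one boundary vertex propagates to at least $k+l+2$ vertices of $F$, and simultaneously ruling out "thin" forts with $k+1\le|F|\le k+l+1$. I expect this to require a careful local analysis of $G[N[v]\cap F]$ and of the common neighbourhoods of pairs of monitored $F$-vertices, organised by the value of $d_F(v)$ and repeatedly invoking the no-$L$-configuration property of $F$; this is where the restriction to $l\le 3$ together with $k\ge l$ does the real work, and where Lemmas \ref{p5} and \ref{lf} are used to control how the monitored region meets the boundary.
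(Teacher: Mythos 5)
Your setup is sound and coincides with the paper's: a maximal packing $S_0$ seeded with one vertex from each $L$-configuration, the observation that $F=V\setminus P^\infty(S_t)$ is a $k$-fort containing no $L$-configuration, and a reduction to a single augmenting claim. But the argument stops exactly where the lemma's content begins. In the case where every vertex of $F$ has a neighbour in $W$, the augmenting claim is announced rather than proved: you write that you ``would then show that claw-freeness forces a cascade\dots reaching at least $k+l+2$ new vertices'' and that you ``expect this to require a careful local analysis.'' That local analysis \emph{is} the lemma. In the paper it takes five claims: two ``win conditions'' (if $|L_u|+|F_u\cap\overline{M}|\ge k+l+2$ one may seed with a vertex of the clique $L_u$ and propagate through it; if some unmonitored second-neighbourhood vertex $w$ satisfies $|L_u|-d_{L_u}(w)\le k$ and has no monitored neighbour in $F_u$, one may seed with $w$ and capture the two disjoint cliques $L_u$ and $L_x$), followed by a case analysis over $|L_u|\in\{k+1,\dots,k+l\}$ and over the degrees $d_{L_u}(w)$ of vertices in the second neighbourhood, where every branch either reaches a win condition, or forces a vertex of degree exceeding $k+l+1$, or exhibits an $L$-configuration meeting $\overline{M}$, contradicting the choice of $S_0$. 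None of this appears in your proposal, and it cannot be waved through: for instance, when $|L_u|=k+1$ and a monitored vertex $w\in F'_u$ has $d_{L_u}(w)\in\{k-1,k\}$, the cascade genuinely threatens to stall one vertex short, and rescuing it requires claw-freeness applied to specific triples together with the regularity count; this is precisely where $l\le 3$ and $k\ge l$ enter, and you only gesture at it.

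Two smaller points. First, you propose to seed with an already-monitored boundary vertex $v\in W$ of maximum $d_F(v)$; since every such $v$ has a monitored neighbour, $d_F(v)\le k+l$ (not $k+l+1$ as you state), so the immediate gain is at most $k+l$ and the cascade must supply at least two more vertices---the paper instead seeds with an \emph{unmonitored} vertex of $L_u$ (or of the second neighbourhood), which is what lets the clique structure of $L_u$ drive the propagation. Second, your appeal to Lemma \ref{lf2} to exclude $L$-configurations from $F$ needs a vertex of $L$ lying in $\mathcal{P}_0$ itself, not merely in $P^\infty(\mathcal{P}_0)$; this matches the paper's own Claim 1, but it deserves a sentence. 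As it stands, the proposal is a correct frame around a missing proof.
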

\begin{proof}
We prove part (a) and part (b) by induction on $t$. If $P^{\infty}(S_0)=V(G)$, then there is nothing to prove. Hence, we may assume that $P^{\infty}(S_0)\neq V(G)$. Let $t\ge 0$ and suppose that $S_{t}$ exists and $P^{\infty}(S_t)\neq V(G)$. Denote $M=P^{\infty}(S_t)$ and $\overline{M}=V(G) \setminus M$. Let $\mathcal{U}=\{u~|~u\in M$ and $N_G(u)\cap \overline{M}\neq \emptyset\}$. For each vertex $u\in \mathcal{U}$, since $N_G[u]\not\subseteq M$, we note that $d_{M}(u)\geq 1$ and $k+1\le d_{\overline{M}}(u)\le k+l$. Moreover, for each $u\in \mathcal{U}$, we define $L_u=N_G(u)\cap \overline{M}=\{u_1,u_2,\dots,u_{d_{\overline{M}}(u)}\}$, $F_u=N_G(L_u)\backslash L_u$ and $F'_u=F_u\setminus \{u\}$. Hence, $k+1\le|L_u|\le k+l$.

We claim that for each vertex $x\in\overline{M}$, $N_G(x)\cap \mathcal{U}\neq \emptyset$. Otherwise, suppose to the contrary that there exists $y\in\overline{M}$ such that $N_G(y)\cap\mathcal{U}=\emptyset$. Then $S_0\cup\{y\}$ is also a packing, contradicting that $S_0$ is a maximal packing. Now we present seven useful claims.

\begin{claim}\label{p1}
    If $H$ is an $L$-configuration of $G$, then $V(H)\subseteq M$. 
\end{claim}
\begin{proof}
By the choose of $S_0$ and Lemma \ref{lf2}, we immediately obtain the Claim 1.
\end{proof}

\begin{claim}\label{c}
    For each $u\in \mathcal{U}$, $L_u$ induces a clique in $G$.
\end{claim}
\begin{proof}
Suppose $x_1$ and $x_2$ are two neighbors of $u$ in $L_u$ and $u$ is observed by $v$ in $M$. Then $x_1v,x_2v\notin E(G)$. If $x_1x_2\notin E(G)$, then $\{u,x_1,x_2, v\}$ induces a claw, a contradiction. Therefore, $L_u$ induces a clique in $G$.
\end{proof}

\begin{claim}\label{g1}
    Let $u\in\mathcal{U}$. If $|L_u|+|F_u\cap \overline{M}|\ge k+l+2$, then for $S_{t+1}=S_t\cup \{u_1\}$, we have
$|P^\infty(S_{t+1})|\geq |P^\infty(S_t)|+k+l+2$.
\end{claim}
\begin{proof}
    Suppose $|L_u|+|F_u\cap \overline{M}|\ge k+l+2$. By Claim  \ref{c}, $L_u$ induces a clique in $G$. We define $S_{t+1}=S_t\cup \{u_1\}$ and we let $j$ be the minimum integer such that $P^j(S_t)=P^\infty(S_t)$. Then, $N[u_1]\subseteq P^0(S_{t+1})\subseteq P^j(S_{t+1})$, and so $L_u\cup\{u\}\subseteq P^j(S_{t+1})$. For each $u'\in L_u\setminus\{u_1\}$, we have
 \[|N(u')\setminus P^j(S_{t+1})|\le k+l+1-|L_u\setminus{u'}|-|\{u\}|\le l\le k.\]
\noindent It means that $N[u']\subseteq P^{j+1}(S_{t+1})$. Therefore, \[|P^\infty(S_{t+1})|\geq |P^\infty(S_t)|+|L_u|+|F_u\cap \overline{M}|\ge |P^\infty(S_t)|+k+l+2.\]
\end{proof}
\begin{claim}\label{g2}
    Let $u\in\mathcal{U}$. If there exists a vertex $w\in F_u\cap \overline{M}$ such that $|L_u|-d_{L_u}(w)\le k$ and $vw\notin E$ for each $v\in M\cap F_u$, then for $S_{t+1}=S_t\cup \{w\}$, we have
$|P^\infty(S_{t+1})|\geq |P^\infty(S_t)|+k+l+2$.
\end{claim}
\begin{proof}
    Suppose there exists a vertex $w\in F_u\cap \overline{M}$ such that $|L_u|-d_{L_u}(w)\le k$ and $vw\notin E$ for each $v\in M\cap F_u$. By Claim  \ref{c}, $L_u$ induces a clique in $G$. Since $N_G(w)\cap \mathcal{U}\neq \emptyset$, there exists $x\in \mathcal{U}$ such that $w\in L_x$. We claim that $L_x\cap L_u=\emptyset$. Otherwise, without loss of generality, assume $u_1\in L_x\cap L_u$. Then, $u_1x\in E$, and so $x\in F_u\cap M$. It leads to $xw\notin E$, a contradiction. Hence, $L_x\cap L_u=\emptyset$. We define $S_{t+1}=S_t\cup \{w\}$ and we let $j$ be the minimum integer such that $P^j(S_t)=P^\infty(S_t)$. Then, $N[w]\subseteq P^0(S_{t+1})\subseteq P^j(S_{t+1})$. By Claim \ref{c}, $L_x\subseteq P^j(S_{t+1})\setminus P^j(S_t)$. Since $|L_u|-d_{L_u}(w)\le k$, we have $L_u\subseteq P^{j+1}(S_{t+1})$. Therefore, we obtain
\[|P^\infty(S_{t+1})|\geq |P^\infty(S_t)|+|L_x|+|L_u|\ge |P^\infty(S_t)|+2(k+1)\ge |P^\infty(S_t)|+k+l+2.\]
\end{proof}

\begin{claim}\label{m1}
    If there is a vertex $u\in \mathcal{U}$ such that $|L_u|=k+l$, part (a) follows as desired.
\end{claim}
\begin{proof}
    Suppose there is a vertex $u\in \mathcal{U}$ such that $|L_u|=k+l$. By Claim  \ref{c}, $L_u$ induces a clique in $G$. If there is a vertex $w\in F'_u$ such that $d_{L_u}(w)\ge k+1$, then $G[\{u,w\}\cup L_u]$ is an $L$-configuration where $L=N_G(w)\cap L_u$, contradicting Claim \ref{p1}.

    Now we assume that $d_{L_u}(w)\le k$ for each $w\in F'_u$. Then, $|F'_u|\ge 2$. If there is a vertex $w\in F'_u$ such that $w\in M$, without loss of generality, suppose $u_1\in L_w$. Since $|L_w|\ge k+1$ and $d_{L_u}(w)\le k$, there is a vertex $w'\in L_w\setminus L_u$. By Claim \ref{c}, $u_1w'\in E$. It leads to $d(u_1)\ge |L_u\setminus\{u_1\}|+|\{u,w,w'\}|\ge k+l+2$, a contradiction. Now suppose $F'_u\subseteq \overline M$. Then, $|L_u|+|F_u\cap\overline M|=|L_u|+|F'_u|\ge k+l+2$. By Claim \ref{g1}, part (a) follows as desired.
\end{proof}
\begin{claim}\label{m2}
    When $l=3$, if $|L_u|=k+2$ for each $u\in \mathcal{U}$, part (a) follows as desired.
\end{claim}
\begin{proof}
    When $l=3$, suppose $|L_u|=k+2$ for each $u\in \mathcal{U}$. By Claim  \ref{c}, $L_u$ induces a clique in $G$. Since $G$ is a connected claw-free $(k+l+1)$-regular graph, $|N(u_1)\setminus(L_u\cup\{u\})|=k+l+1-(k+2)=2$, implying that $|F'_u|\ge 2$. We claim that $|F'_u|\ge 3$. Otherwise, we suppose $F'_u=\{w_1,w_2\}$, implying that $d_{L_u}(w_1)=d_{L_u}(w_2)=k+2$. Then, $G[L_u\cup F_u]$ is an $L$-configuration where $L=L_u$, contradicting Claim \ref{p1}. Hence, $|F'_u|\ge 3$. If $F'_u\cap M=\emptyset$, then $|L_u|+|F_u\cap \overline M|=|L_u|+|F'_u|\ge k+l+2$. By Claim \ref{g1}, part (a) follows as desired.

    Now suppose that $F'_u\cap M\neq\emptyset$. If there is a vertex $w\in F'_u\cap M$ such that $d_{L_u}(w)\le k$, without loss of generality, suppose that $u_1\in L_w$. Since $|L_w|=k+2$, there are two vertices $w',w''\in L_w\setminus L_u$. By Claim \ref{c}, $u_1w',u_1w''\in E$. It leads to $d(u_1)\ge |L_u\backslash \{u_1\}|+|\{u,w,w',w''\}|=k+5$, a contradiction.

    If there is a vertex $w\in F'_u\cap M$ such that $d_{L_u}(w)=k+1$ , without loss of generality, suppose $N_{L_u}(w)=\{u_1,u_2,\cdots,u_{k+1}\}$. Since $|L_w|=k+2$, there is a vertex $w'\in L_w\setminus L_u$. By Claim \ref{c}, $\{u_1,u_2,\cdots,u_{k+1},w'\}$ induces a clique in $G$. Then, $G[L_u\cup \{u,w,w'\}]$ is an $L$-configuration where $L=N_G(w)\cap L_u$, contradicting Claim \ref{p1}.

    Finally, we consider the case that there is a vertex $w\in F'_u\cap M$ such that $d_{L_u}(w)=k+2$. Let $F''_u=F'_u\setminus\{w\}$. If $F''_u\cap M\ne\emptyset$, let $w'\in F''_u\cap M$. By the above argument, we deduce that $d_{L_u}(w')=k+2$. Hence, $G[L_u\cup \{u,w,w'\}]$ is an $L$-configuration where $L= L_u$, contradicting Claim \ref{p1}. Now suppose $F''_u\subseteq \overline M$. If $|F''_u|=1$, let $F''_u=\{w''\}$ and we have $d_{L_u}(w'')=k+2$. Similar to the above proof, we obtain a contradiction. If $|F''_u|=2$, let $F''_u=\{w_1,w_2\}$ and $w_1,w_2\in \overline{M}$. Since $d_{L_u}(w_1)+d_{L_u}(w_2)=k+2$, without loss of generality, we assume that $d_{L_u}(w_1)\ge 2$. Since $|L_w|=|L_u|=k+2$, we obtain $|L_u|-d_{L_u}(w_1)\le k$, $uw_1\notin E$ and $ww_1\notin E$. By Claim \ref{g2}, we have proved part (a). If $|F''_u|\ge 3$, then $|L_u|+|F_u\cap \overline M|=|L_u|+|F''_u|\ge k+5$. By Claim \ref{g1}, part (a) follows as desired.
\end{proof}
\begin{claim}\label{m3}
    If there is a vertex $u\in \mathcal{U}$ such that $|L_u|=k+1$, part (a) follows as desired.
\end{claim}
\begin{proof}

    Suppose there is a vertex $u\in \mathcal{U}$ such that $|L_u|=k+1$. By Claim  \ref{c}, $L_u$ induces a clique in $G$. If $M\cap F'_u=\emptyset$, then $F'_u\subseteq \overline M$.  Since $G$ is a connected claw-free $(k+l+1)$-regular graph, $|N(u_1)\setminus(L_u\cup\{u\})|=k+l+1-|L_u|=l$, implying that $|F'_u|\ge l$. We claim that $|F'_u|\ge l+1$. Otherwise, suppose $F'_u=\{v_1,v_2,\cdots,v_l\}$, implying that $L_u\subseteq N_G[v_i]$ for each $i\in\{1,2,\cdots,l\}$. Then, $G[L_u\cup F_u]$ is an $L$-configuration where $L=L_u$, contradicting Claim \ref{p1}. So, $|F'_u|\ge l+1$ and $|L_u|+|F_u\cap\overline M|=|L_u|+|F'_u|\ge k+l+2$. By Claim \ref{g1}, part (a) follows as desired.

Now assume that $M\cap F'_u\neq\emptyset$. If there is a vertex $w\in M\cap F'_u$ such that $d_{L_u}(w)\le k-l+1$, without loss of generality, suppose that $u_1\in N_G(w)\cap L_u$. Since $|L_w|\ge k+1$, we have $|L_w\setminus L_u|\ge l$. Assume that $\{x_1,x_2,\cdots,x_l\}\subseteq(L_w\setminus L_u)$. By Claim \ref{c}, $u_1x_i\in E$ for each $i\in\{1,2,\cdots,l\}$. It leads to $d(u_1)\ge|L_u\setminus\{u_1\}|+|\{u,w,x_1,x_2,\cdots,x_l\}|\ge k+l+2$, a contradiction.

    Then, we suppose $d_{L_u}(w)\ge k-l+2$ for each $w\in M\cap F'_u$. If there exists a vertex $w_1\in F_u\cap \overline{M}$ such that $vw_1\notin E$ for each $v\in M\cap F_u$, by Claim \ref{g2}, part (a) follows as desired. Otherwise, we can assume that for each $w_1\in F_u\cap \overline{M}$, there is a vertex $v\in M\cap F_u$ such that $vw_1\in E$. By Claim \ref{c}, $N_G(v)\cap L_u\subseteq N_G(w_1)\cap L_u$, and so $d_{L_u}(w_1)\ge d_{L_u}(v)\ge k-l+2$. Hence, $d_{L_u}(w_1)\ge k-l+2$ for each $w_1\in F_u$. If $d_{L_u}(w)=k+1$ for each $w\in M\cap F'_u$, then for each $w'\in F'_u\cap \overline{M}$, there is a vertex $w''\in M\cap F_u$ such that $w''w'\in E$ and $d_{L_u}(w'')=k+1$. By the above argument, we deduce that $d_{L_u}(w')\ge d_{L_u}(w'')= k+1$ and $|F'_u|=l$. Then, $G[L_u\cup F_u]$ is an $L$-configuration where $L=L_u$, contradicting Claim \ref{p1}.

If there is a vertex $w\in M\cap F'_u$ such that $d_{L_u}(w)= k$, without loss of generality, suppose that $N_G(w)\cap L_u=\{u_1,u_2,\cdots,u_k\}$. Since $|L_w|\ge k+1$, there is a vertex $w_1\in L_w\setminus L_u$.   By Claim \ref{c}, $u_iw_1\in E$ for each $i\in\{1,2,\cdots,k\}$. Let $F''_u=F'_u\setminus \{w,w_1\}$. It is clear that $F''_u\neq\emptyset$. For $l=2$, let $w_2\in F''_u$. Then $d_{L_u}(w_2)=1<k=k-l+2$, contradicting that $d_{L_u}(x)\ge k-l+2$ for each $x\in F_u$.  For $l=3$, if there is a vertex $w_2\in F''_u$ such that $\{u_1,u_2,\cdots,u_k\}\subseteq N_G(w_2)\cap L_u$, we can similarly get a contradiction. Now we assume that for each vertex $v'\in F''_u$, $\{u_1,u_2,\cdots,u_k\}\not\subseteq N_G(v')\cap L_u$. If $F''_u\cap M\neq\emptyset$, suppose $w_2\in F''_u\cap M$. Since  $d_{L_u}(w_2)\ge k-l+2\ge k-1\ge l-1\ge 2$, we have $N_{L_u}(w)\cap N_G(w_2)\neq \emptyset$. Let $x\in N_{L_u}(w)\cap N_G(w_2)$. Since $d(x)=k+4$ and Claim \ref{c}, $\{u_1,u_2,\cdots,u_k\}\subseteq N_G(w_2)\cap L_u$, a contradiction. So, $F''_u\subseteq \overline M$. Let $y\in F''_u$. It is clear that $uy\notin E$. We claim that $wy\notin E$. Otherwise, suppose $wy\in E$. By Claim \ref{c}, $\{u_1,u_2,\cdots,u_k\}\subseteq N_G(y)\cap L_u$, a contradiction. Hence, $|L_u|-d_{L_u}(y)\le k$ and $vy\notin E$ for each $v\in M\cap F_u$. By Claim \ref{g2}, part (a) follows as desired.

    If there is a vertex $w\in M\cap F'_u$ such that $d_{L_u}(w)=k-1$, then we obtain $l=3$ since $d_{L_u}(w)=k-1\ge k-l+2$. Without loss of generality, assume that $N_G(w)\cap L_u=\{u_1,u_2,\cdots,u_{k-1}\}$. Since $|L_w|\ge k+1$, there are two vertices $w_1,w_2\in L_w\setminus L_u.$ By Claim \ref{c}, $u_iw_1,u_iw_2\in E$ for each $i\in\{1,2,\cdots,k-1\}$. Let $F''_u=F'_u\backslash \{w,w_1,w_2\}$. It is clear that $F''_u\neq \emptyset$. Then, for each $w'\in F''_u$, we have $d_{L_u}(w')\le 2$. Since $d_{L_u}(w')\ge k-l+2$ and $k\ge l$, we obtain $k=3$ and $d_{L_u}(w')=2$. If $F''_u\cap M=\emptyset$, then $F''_u\subseteq \overline M$. Let $z\in F''_u$. Then, $zu\notin E$. We claim that $zw\notin E$. Otherwise, suppose $zw\in E$. By Claim \ref{c}, $zu_1\in E$. It leads to $d(u_1)\ge |L_u\setminus\{u_1\}|+|\{u,w,w_1,w_2,z\}|\ge k+5$, a contradiction.  Since $|L_u|-d_{L_u}(z)\le k$ and Claim \ref{g2}, part (a) follows as desired. Then, we assume that $F''_u\cap M\neq\emptyset$ and $w_3\in F''_u\cap M$. If $w_1w_3,w_2w_3\in E$, then $d_{L_u}(w_1)=d_{L_u}(w_2)=4$ by Claim \ref{c}. So, $G[L_u\cup F_u]$ is an $L$-configuration where $L=L_u\cup \{w_1,w_2\}$, contradicting Claim \ref{p1}. If $w_1w_3,w_2w_3\notin E$, then there are two vertices $w_4,w_5\in L_{w_3}\setminus L_u$.  Since $w_3\in\mathcal{U}$ and Claim \ref{c}, we have $w_4,w_5\in F_u$. Then, $|L_u|+|F_u\cap\overline M|\ge |L_u|+|\{w_1,w_2,w_4,w_5\}|\ge k+l+2$. By Claim \ref{g1}, part (a) follows as desired. Now we consider the last case. Without loss of generality, suppose  $w_1w_3\in E$ and $w_2w_3\notin E$. Then, there is a vertex $w_4\in N(w_3)\setminus(L_u\cup \{w_1\})$ such that $w_4\in \overline{M}$. By Claim \ref{c}, $\{u_3,u_4,w_1,w_4\}$ induces a clique in $G$. So, $d(w_1)\ge |L_u|+|\{w,w_2,w_3,w_4\}|=8>k+l+1=7$, a contradiction.

\end{proof}
Since $|L_u|\in \{k+1,k+2\}$ for $l=2$ and $|L_u|\in \{k+1,k+2,k+3\}$ for $l=3$, by Claims \ref{m1}-\ref{m3}, part (a) follows as desired. Since $|V(G)|$ is finite, there exists an integer $q$ such that $P^\infty(S_q)=V(G)$. Hence, we complete the proof.
\end{proof}

We are now in a position to prove our main result, namely, Theorem \ref{MainResult}. 

\begin{proof}
Let $G$ be a counterexample such that $|V(G)|$ is minimal. Let $S_0,S_1,\cdots,S_q$ be a sequence satisfying properties (a)-(b) in the statement of Lemma \ref{mainlma} with $q$ as small as possible. By Lemma \ref{mainlma} (b), the set $S_q$ is a $k$-PDS in $G$, and so $\gamma_{p,k}(G)\le |S_q|$. Since $S_0$ is a packing in $G$, we have that $|P^0(S_0)|=|N[S_0]|=(k+l+2)|S_0|$. If $q=0$, then $(k+l+2)|S_0|\le n$ and $\gamma_{p,k}(G)\le |S_0|\le \frac{n}{k+l+2}$, a contradiction. Now we suppose that $q\ge 1$. By Lemma \ref{mainlma} (a), $|S_q|=|S_0|+q$. By our choice of $q$, we decuce that $|P^\infty(S_{t+1})|\ge|P^\infty(S_t)|+k+l+2$ for $0\le t\le q-1$. Thus,
\[n=|P^\infty(S_q)|\ge |P^0(S_0)|+q(k+l+2)=(|S_0|+q)(k+l+2)=|S_q|(k+l+2).\]
Hence, $\gamma_{p,k}(G)\le |S_q|\le \frac{n}{k+l+2}$, a contradiction. This proves the desired upper bound.

Next, we show this bound is tight. For positive integers $k\ge l$ and $t$, we define the graph $C_{k,t}$ as follows. Take $t$ disjoint copies $C_i\cong A_l$ and link any two copies $(C_i,C_{i+1})$ with $l$ edges, where the subscripts are to be read as integers modulo $t$ and where $i=1,2,\cdots,t$. (see Figure \ref{equ}). Then, $C_{k,t}$ is a connected claw-free $(k+l+1)$-regular graph of order $n=t(k+l+2)$. Suppose that $S$ is an arbitrary $k$-PDS in $C_{k,t}$. It is easy to check that $C_i$ contains a $k$-fort of $G$, where $i=1,2,\cdots,t$. By Proposition \ref{k-fort}, $|S\cap V(C_i)|\ge 1$ for each $i\in\{1,2,\cdots,t\}$. It means that $\gamma_{p,k}(C_{k,t})\ge t=\frac{n}{k+l+2}$. Since the above proof, we obtain $\gamma_{p,k}(C_{k,t})\le \frac{n}{k+l+2}$. Hence,  $\gamma_{p,k}(C_{k,t})=\frac{n}{k+l+2}$.
\end{proof}


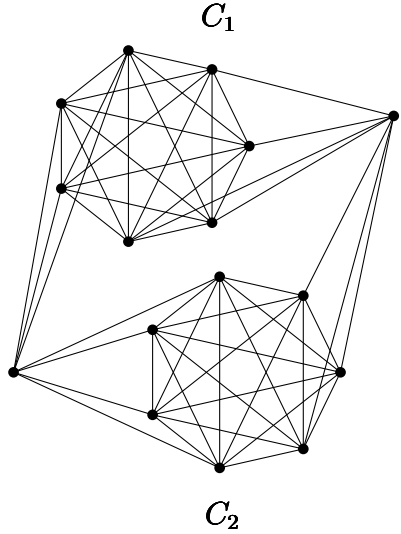
\begin{figure}[H]
    \centering
    \begin{tikzpicture}
        \foreach \i in {1,...,7}
        {
            \coordinate (x\i) at ({cos(360/7*\i)*1.3-0.6},{sin(360/7*\i)*1.3+1.5});
            \coordinate (y\i) at ({cos(360/7*\i)*1.3+0.6},{sin(360/7*\i)*1.3-1.5});
            \node at (0.3,3.2) {$C_1$};
            \node at (0.35,-3.4) {$C_2$};
            \fill (x\i) circle[radius=2pt];
            \fill (y\i) circle[radius=2pt];
        }
        \foreach \i in {1,...,6}
        {
            \foreach \j in {\i,...,7}
            {
                \draw (x\i) -- (x\j);
                \draw (y\i) -- (y\j);
            }
        }
        \coordinate (z1) at (-2.4,-1.5);
        \coordinate (z2) at (2.6,1.9);
        \fill (z1) circle[radius=2pt];
        \fill (z2) circle[radius=2pt];
        \foreach \i in {2,3,4}
        {
            \draw (z1) -- (x\i);
        }
        \foreach \i in {2,3,4,5}
        {
            \draw (z1) -- (y\i);
        }
        \foreach \i in {1,5,6,7}
        {
            \draw (z2) -- (x\i);
        }
        \foreach \i in {6,7,1}
        {
            \draw (z2) -- (y\i);
        }
    \end{tikzpicture}
    \caption{$C_{k,t}$ for $l=3$, $k=3$ and $t=2$}\label{equ}
\end{figure}
\section{Conjecture and Question}
We pose the following conjecture which is still open.
\begin{conjecture}\label{ourConj}
For $l\ge 1$ and $k\ge l$, if $G$ is a connected claw-free $(k+l+1)$-regular graph of order $n$, then $\gamma_{p,k}(G)\le \frac{n}{k+l+2}$ and the bound is tight.
\end{conjecture}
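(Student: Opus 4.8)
The plan is to build the sequence greedily by induction on $t$, starting from the maximal packing $S_0$ and adding exactly one vertex at each step so that the monitored set grows by at least $k+l+2$. Assume $M:=P^\infty(S_t)\neq V(G)$ and set $\overline M=V(G)\setminus M$. The natural place to look for progress is the boundary $\mathcal U=\{u\in M: N(u)\cap\overline M\neq\emptyset\}$. For $u\in\mathcal U$ I write $L_u=N(u)\cap\overline M$; because $u$ lies in $M$ yet still has unmonitored neighbours, the propagation rule forces $k+1\le|L_u|$, while $(k+l+1)$-regularity together with $d_M(u)\ge1$ gives $|L_u|\le k+l$. Thus $|L_u|\in\{k+1,\dots,k+l\}$, a short list ($\{k+1,k+2\}$ when $l=2$ and $\{k+1,k+2,k+3\}$ when $l=3$) on which the whole argument will be organised.

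Before the case analysis I would install three structural facts. First, claw-freeness forces each $L_u$ to induce a clique: a monitored vertex $v$ witnessing the inclusion of $u$ in $M$ necessarily has all its neighbours in $M$ (the initialisation/propagation rule monitors the whole closed neighbourhood of its trigger), so $v$ is non-adjacent to $\overline M$, and any two non-adjacent vertices of $L_u$ would create a claw centred at $u$ on $\{u,x_1,x_2,v\}$. Second, maximality of the packing $S_0$ forces every vertex of $\overline M$ to have a neighbour in $\mathcal U$, since otherwise it could be appended to $S_0$. Third, and crucially, the way $\mathcal P_0$ and hence $S_0$ was chosen, together with Lemma~\ref{lf2}, guarantees that every $L$-configuration of $G$ lies entirely inside $M$; this is the prohibition I would invoke to kill every overly dense sub-configuration that the bookkeeping produces.

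With these in hand I would set up two gain engines. Writing $F_u=N(L_u)\setminus L_u$ and $F'_u=F_u\setminus\{u\}$, the first engine says: if $|L_u|+|F_u\cap\overline M|\ge k+l+2$, then adding the clique vertex $u_1\in L_u$ first monitors $L_u\cup\{u\}$, and then, since each $u'\in L_u$ now has at most $k+l+1-|L_u|\le l\le k$ unmonitored neighbours, propagation reaches all of $F_u\cap\overline M$, for a net gain of $|L_u|+|F_u\cap\overline M|\ge k+l+2$. The second engine says: if some $w\in F_u\cap\overline M$ has deficiency $|L_u|-d_{L_u}(w)\le k$ and no neighbour in $M\cap F_u$, then by the second fact $w$ lies in some $L_x$ with $x\in\mathcal U$, one checks $L_x\cap L_u=\emptyset$, and adding $w$ monitors the two disjoint cliques $L_x$ and then $L_u$, a gain of at least $2(k+1)\ge k+l+2$ using $k\ge l$. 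The task then reduces to showing that, for each possible value of $|L_u|$, one of the two engines can always be triggered.

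The bulk of the work — and the step I expect to be the main obstacle — is the case $|L_u|=k+1$ (and, within $l=3$, the intermediate case $|L_u|=k+2$). When $|L_u|=k+l$ is largest, $F'_u$ is small and either forces an $L$-configuration, contradicting the third fact, or lies in $\overline M$ and triggers the first engine, so that case is quick. When $|L_u|=k+1$ the clique shields only $k+1$ vertices and the $l$ surplus edges at each vertex of $L_u$ may scatter between $M$ and $\overline M$, so neither engine is immediately available; here I would repeatedly use the clique-forcing from claw-freeness to pin down the adjacencies among $L_u$, $M\cap F_u$ and $F_u\cap\overline M$, and then derive a contradiction either from the $L$-configuration prohibition or from a vertex exceeding degree $k+l+1$, until one of the two engines applies. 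Finiteness of $V(G)$ gives termination and hence part~(b). The delicate adjacency bookkeeping in the $|L_u|=k+1$, $l=3$ subcases, where a single mis-tracked edge collapses the argument, is where the real difficulty lies.
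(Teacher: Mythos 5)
You have not proved the statement, and neither does the paper: Conjecture \ref{ourConj} is asserted for all $l\ge 1$ with $k\ge l$, but your entire case analysis is organised around $|L_u|\in\{k+1,k+2\}$ for $l=2$ and $|L_u|\in\{k+1,k+2,k+3\}$ for $l=3$. That is exactly the content of Theorem \ref{MainResult}, and for those two values of $l$ your outline faithfully reproduces the paper's argument (the packing $S_0$ seeded by one vertex from each $L$-configuration, the two gain mechanisms of Claims \ref{g1} and \ref{g2}, and the case split on $|L_u|$). For $l\ge 4$ the conjecture is explicitly left open in the paper, and nothing in your proposal touches it. The missing part is not a routine extension of what you wrote: for general $l$ there are $l-2$ intermediate cases $|L_u|=k+2,\dots,k+l-1$, each of which would need its own configuration analysis analogous to (but harder than) Claim \ref{m2}, and the steps you do sketch rely on numerical coincidences that hold only for small $l$. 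For instance, in the $|L_u|=k+1$ analysis the subcase $d_{L_u}(w)=k-1$ is dispatched by deducing $l=3$ and then $k=3$ from the bound $d_{L_u}(w)\ge k-l+2$, and several of the degree contradictions take the form $d(u_1)\ge k+5>k+l+1$, which simply fails once $l\ge 4$. A proof of the conjecture would require a genuinely new, uniform-in-$l$ argument for these cases; supplying one is the open problem, not a bookkeeping exercise.

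A secondary omission: the statement also asserts that the bound is tight, and you never address this. That half is easy for every $l$ --- the graphs $C_{k,t}$ obtained by chaining $t$ copies of $A_l$ are connected, claw-free, $(k+l+1)$-regular of order $t(k+l+2)$, and each copy contains a $k$-fort, so Proposition \ref{k-fort} forces $\gamma_{p,k}\ge t$ --- but it must be stated. In summary: what you wrote is a correct high-level summary of the proof of Theorem \ref{MainResult} (the cases $l\in\{2,3\}$), not a proof of Conjecture \ref{ourConj}.
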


Remark that if $l=1$, then the conjecture is true by the result of Chang et al. in \cite{chang2012generalized}. If $l\in\{2,3\}$, the conjecture is true by our Theorem \ref{MainResult}. When $l\ge 4$, the conjecture is still open. However, note that the bound of Conjecture \ref{ourConj} is tight since we can generalize the graph $C_{k,t}$ (defined in Section 3) to achieve this bound.

Now we pose the following question.

\begin{question}\label{CfreeQues}
For $r\ge 3$, let $G$ be a connected claw-free $r$-regular graph of order $n$. Determine the smallest positive value, $k_{min}(r)$, of $k$ such that $\gamma_{p,k}(G)\le \frac{n}{r+1}$.
\end{question}

By Observations \ref{odd} and \ref{even}, we deduce that $k_{min}(r)\ge \lfloor\frac{r}{2}\rfloor$. We remark that if Conjecture \ref{ourConj} is true, the answer of Question \ref{CfreeQues}  is $k_{min}(r)=\lfloor\frac{r}{2}\rfloor$. 

\end{document}